\newtheorem{thm}{Theorem}[section]
\newtheorem{cor}{Corollary}[section]
\newtheorem{lem}{Lemma}[section]
\theoremstyle{definition}
\theoremstyle{remark}
\newtheorem{rem}{Remark}[section]
\numberwithin{equation}{section}
\begin{document}

\title[{Arithmetic Properties of Overpartition Triples}]
 {Arithmetic Properties of Overpartition Triples}

\author[Liuquan Wang]
{Liuquan Wang}

\address{Department of Mathematics, National University of Singapore, Singapore, 119076}

\email{wangliuquan@u.nus.edu; mathlqwang@163.com}

\subjclass[2010]{Primary 05A17; Secondary 11P83}

\keywords{Partitions; overpartition triples; congruences; theta functions.}


\dedicatory{}


\begin{abstract}
 Let ${{\overline{p}}_{3}}(n)$ be the number of overpartition triples of $n$. By elementary series manipulations, we establish some congruences for ${\overline{p}}_{3}(n)$ modulo small powers of 2, such as
  \[{{\overline{p}}_{3}}(16n+14)\equiv 0 \pmod{32}, \quad {{\overline{p}}_{3}}(8n+7)\equiv 0 \pmod{64}.\]
We also find many arithmetic properties for ${{\overline{p}}_{3}}(n)$  modulo  7,  9 and 11, involving the following infinite families of Ramanujan-type congruences: for any integers $\alpha \ge 1$ and $n \ge 0$, we have ${{\overline{p}}_{3}}\big({{3}^{2\alpha +1}}(3n+2)\big)\equiv 0$ (mod $9\cdot 2^4$), $\overline{p}_{3}(4^{\alpha-1}(56n+49)) \equiv 0$ (mod 7) and
\[{{\overline{p}}_{3}}\big({{7}^{2\alpha +1}}(7n+3)\big)\equiv {{\overline{p}}_{3}}\big({{7}^{2\alpha +1}}(7n+5)\big)\equiv {{\overline{p}}_{3}}\big({{7}^{2\alpha +1}}(7n+6)\big)\equiv 0 \pmod{7}.\]
\end{abstract}

\maketitle

\section {Introduction}
An overpartition $\omega $ of $n$ is a partition of $n$ in which the first occurrence of a part may be overlined.  An overpartition $k$-tuple $({{\omega }_{1}},{{\omega }_{2}},\cdots ,{{\omega }_{k}})$ of $n$ is a $k$-tuple of overpartitions where the sum of all of the parts is $n$. We denote by ${{\overline{p}}_{k}}(n)$ the number of overpartition $k$-tuples of a positive integer $n$ and we define ${{\overline{p}}_{k}}(0)=1$ for convenience. In particular, for $k=1$,
${{\overline{p}}_{1}}(n)$ is denoted as $\overline{p}(n)$. For $k=2$, ${{\overline{p}}_{2}}(n)$ is often denoted  as $\overline{pp}(n)$.

We recall that two of Ramanujan's theta functions are
\begin{displaymath}
\begin{split}
\varphi (q)&=\sum\limits_{n=-\infty }^{\infty }{{{q}^{{{n}^{2}}}}}=\frac{{{(-q;-q)}_{\infty }}}{{{(q;-q)}_{\infty }}}=\frac{(-q;-q)_{\infty }^{2}}{{{({{q}^{2}};{{q}^{2}})}_{\infty }}}, \\
\psi (q)&=\sum\limits_{n=0}^{\infty }{{{q}^{n(n+1)/2}}}=\frac{({{q}^{2}};{{q}^{2}})_{\infty }^{2}}{{{(q;q)}_{\infty }}},
\end{split}
\end{displaymath}
 where ${{(a;q)}_{\infty }}=\prod\nolimits_{n\ge 0}{(1-a{{q}^{n}})}$ is  standard $q$ series notation. It is not difficult to find that the generating function of ${{\overline{p}}_{k}}(n)$ is
\[\sum\limits_{n\ge 0}{{{\overline{p}}_{k}}(n){{q}^{n}}}={{\Big(\frac{{{(-q;q)}_{\infty }}}{{{(q;q)}_{\infty }}}\Big)}^{k}}=\frac{1}{\varphi {{(-q)}^{k}}}.\]

Arithmetic properties of $\overline{p}(n)$ and $\overline{pp}(n)$ have been investigated by many mathematicians. Numerous results have been obtained,  see \cite{Bring}--\cite{Lovejoy} and \cite{Sellers05}-\cite{Wang}. Here we shall mention some  research which motivates this work.

By using 2-adic expansions of $\overline{p}(n)$ or the theory of modular forms, people found many congruences for $\overline{p}(n)$ modulo powers of 2. For example,  Mahlburg \cite{Mahlburg} showed that $\overline{p}(n)\equiv 0$ (mod 64) holds for a set of integers of arithmetic density 1. Kim \cite{Kim1} gave the modulo 8 determination of $\overline{p}(n)$. He \cite{Kim3} also proved that $\overline{p}(n)$ is divisible by 128 for almost all positive integers. Similar results for $\overline{pp}(n)$ have also been found, see \cite{Kim2} for details. There are also some interesting Ramanujan-type congruences modulo small powers of 2. For example, Chen and Lin \cite{Chen} proved that
\[\overline{pp}(8n+7)\equiv 0  \pmod{64}.\]
Keister, Sellers and Vary \cite{Keister} showed that if $k={{2}^{m}}r$ where $m\ge 0$, and $r\ge 1$ is an odd integer, then
\begin{equation}\label{mod2}
{{\overline{p}}_{k}}(n)\equiv \left\{ \begin{array}{ll}
   {{2}^{m+1}} &\textrm{if $n$ is a square or twice a square} \\
 0  & \textrm{otherwise}\\
\end{array} \right.   \pmod{{{2}^{m+2}}}.
\end{equation}

Following their steps, in Section 2, we give the modulo 16 determination of $\overline{p}_{3}(n)$. Meanwhile, we establish some congruences for ${\overline{p}}_{3}(n)$ modulo small powers of 2. For example, we have
\[{{\overline{p}}_{3}}(16n+14)\equiv 0 \pmod{32}, \quad {{\overline{p}}_{3}}(8n+7)\equiv 0 \pmod{64}.\]

In 2005, Hirschhorn and Sellers \cite{Sellers05} obtained an  infinite family of congruences for $\overline{p}(n)$  modulo 3: for any nonnegative integers  $n$ and $\alpha$,
\[\overline{p}({{9}^{\alpha }}(27n+18))\equiv 0 \pmod{3}.\]

Chen and Lin \cite{Chen} gave two infinite families of congruences for $\overline{pp}(n)$ modulo 3:  for any  integers $\alpha \ge 1$ and $n\ge 0$,
\[\overline{pp}({{9}^{\alpha }}(3n+1))\equiv \overline{pp}({{9}^{\alpha }}(3n+2))\equiv 0 \pmod{3}.\]

 Inspired by their work, in Section 3, we will present an infinite family of congruences for ${{\overline{p}}_{3}}(n)$  modulo 9. For any integers $\alpha \ge 1$ and $n\ge 0$, we will show that
\[{{\overline{p}}_{3}}({{3}^{2\alpha +1}}(3n+2))\equiv 0 \pmod{9\cdot 2^4}.\]

It seems much harder to obtain congruences for ${{\overline{p}}_{k}}(n)$ modulo primes other than 2 and 3. The methods used in this direction are quite different. We mention two results here. By using $(p,k)$-parametrization of theta functions,  Chen and Xia \cite{Xia} proved a conjecture of Hirschhorn and Sellers \cite{Hirschhorn05}:
\[\overline{p}(40n+35)\equiv 0  \pmod{40}.\]
Their proof is relatively long and two different short proofs of this conjecture were then given by Lin \cite{Lin} and the author \cite{Wang}.

In 2012, Chen and Lin \cite{Chen} found two infinite families of congruences satisfied by ${\overline{pp}}(n)$. For any integers $\alpha \ge 1$ and $n\ge 0$, we have
\[\overline{pp}({{5}^{\alpha }}(5n+2))\equiv \overline{pp}({{5}^{\alpha }}(5n+3))\equiv 0 \pmod{5}.\]

For nonnegative integers $n$ and $k$,  let ${{r}_{k}}(n)$  denote the number of representations of $n$ as sum of $k$ squares.  In Section 4, we establish the following two arithmetic relations: For any integer $n\ge 1$,
\[{{\overline{p}}_{3}}(7n)\equiv {{(-1)}^{n}}{{r}_{3}}(n) \pmod{7},\]
and
\[{{\overline{p}}_{3}}(11n)\equiv {{(-1)}^{n}}{{r}_{7}}(n) \pmod{11}.\]
From these two relations we deduce many interesting congruences such as $\overline{p}_{3}(4^{\alpha-1}(56n+49)) \equiv 0$ (mod 7) and
\[{{\overline{p}}_{3}}\big({{7}^{2\alpha +1}}(7n+3)\big)\equiv {{\overline{p}}_{3}}\big({{7}^{2\alpha +1}}(7n+5)\big)\equiv {{\overline{p}}_{3}}\big({{7}^{2\alpha +1}}(7n+6)\big)\equiv 0 \pmod{7}.\]

\section{Congruences  Modulo Powers of 2}

\begin{thm}\label{mod16}
We have
\begin{displaymath}
{{\overline{p}}_{k}}(n)\equiv \left\{ \begin{array}{ll}
   -2k, &\textrm{if $n$ is an even square}; \\
 2k, &\textrm{if $n$ is an odd square}; \\
  2k(k+1), &\textrm{if $n$ is twice a square}; \\
  0, & \textrm{else}
\end{array} \right. \pmod{{{2}^{m (k)}}}.
\end{displaymath}
where $m(k)=4$ when $k\equiv 0,3$ \text{\rm{(mod $4$)}} and $m(k)=3$ otherwise .
\end{thm}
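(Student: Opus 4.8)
The plan is to start from the generating-function identity $\sum_{n}\overline{p}_k(n)q^n=\varphi(-q)^{-k}$ and exploit the sparse theta representation $\varphi(-q)=\sum_{n\in\mathbb{Z}}(-1)^nq^{n^2}=1+2A$, where $A=\sum_{n\ge 1}(-1)^nq^{n^2}$. Since $2A$ is divisible by $2$ and has no constant term, I would expand through the binomial series
\[
\varphi(-q)^{-k}=(1+2A)^{-k}=\sum_{j\ge 0}\binom{-k}{j}2^jA^j,
\]
and truncate modulo $2^{m(k)}$. Because $2^j\equiv 0\pmod{16}$ for $j\ge 4$, only $j=0,1,2,3$ survive; I would record the resulting coefficients $1$, $-2k$, $2k(k+1)$, $-8\binom{k+2}{3}$ of $1,A,A^2,A^3$ and note their $2$-adic valuations as functions of $k$ modulo $4$.

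The heart of the argument is then a term-by-term reading of coefficients. The linear term $-2kA=-2k\sum_{n\ge 1}(-1)^nq^{n^2}$ contributes $-2k(-1)^n$ to $q^{n^2}$, i.e.\ $-2k$ on even squares and $2k$ on odd squares, which already produces the first two cases. For the quadratic term I would apply the involution $(m,n)\mapsto(n,m)$ to the representations of $N$ as an ordered sum of two positive squares: its only fixed points occur when $N=2s^2$, so the coefficient $c(N)$ of $q^N$ in $A^2=\sum_{m,n\ge 1}(-1)^{m+n}q^{m^2+n^2}$ is \emph{odd} precisely when $N$ is twice a square and even otherwise. Since the $A^2$-coefficient $2k(k+1)$ is a multiple of $4$, multiplying by an even $c(N)$ lands in $16\mathbb{Z}$ when $k\equiv 0,3\pmod 4$ (where $4\mid k(k+1)$) and in $8\mathbb{Z}$ when $k\equiv 1,2\pmod 4$; hence $A^2$ contributes nothing except on twice-squares, where an odd $c(N)$ yields exactly $2k(k+1)$ modulo $2^{m(k)}$.

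Finally I would dispose of the cubic term, and this is where the precise value of $m(k)$ is forced. Its coefficient is $-8\binom{k+2}{3}$, and a short parity check on three consecutive integers shows $\binom{k+2}{3}$ is even exactly when $k\equiv 0,2,3\pmod 4$. Thus for $k\equiv 0,3\pmod 4$ the cubic term is divisible by $16$ and drops out modulo $2^{m(k)}=16$, while for $k\equiv 1,2\pmod 4$ it is automatically divisible by $8$ and drops out modulo $2^{m(k)}=8$; in neither relevant case do the three-square representation counts need to be evaluated. Assembling the three surviving contributions case by case (even square, odd square, twice a square, otherwise, with $n\ge 1$ throughout) gives the stated formula. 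I expect the main obstacle to be bookkeeping the interaction between the $2$-adic valuations of the coefficients $\binom{-k}{j}$ and the parities of the representation numbers so that the modulus comes out as exactly $m(k)$ rather than something weaker; the involution argument for $A^2$ together with the divisibility of $\binom{k+2}{3}$ are the two facts that make this alignment work.
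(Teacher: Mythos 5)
Your proposal is correct, and its skeleton coincides with the paper's: expand $\varphi(\pm q)^{-k}=(1+2A)^{-k}$ binomially, note that the $j\ge 4$ terms vanish modulo $16$, and sort the surviving coefficients $-2k$, $2k(k+1)$, $-8\binom{k+2}{3}$ by their $2$-adic valuations as $k$ varies modulo $4$ (your parity computation for $\binom{k+2}{3}$ and your observation that for $k\equiv 2\pmod 4$ it is the $A^2$ error term, not the cubic term, that forces $m(k)=3$ are both right). Where you genuinely diverge is in the treatment of the quadratic term. The paper passes from $r_2^+(n)$ to the full representation count $r_2(n)$ via the identity $r_2^+(n)=\tfrac14 r_2(n)-\chi(n)$ and then invokes Jacobi's two-square theorem, $r_2(n)=4\sum_{d\mid n,\,d\ \mathrm{odd}}(-1)^{(d-1)/2}$, together with the fact that $d(2n_0+1)$ is odd exactly for odd squares, to conclude $r_2(n)\equiv 4$ or $0\pmod 8$ according as $n$ is (a square or twice a square) or not. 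You instead extract the only datum actually needed --- the parity of $r_2^+(n)$ --- directly from the swap involution $(m,n)\mapsto(n,m)$ on ordered representations, whose unique fixed point occurs precisely when $n$ is twice a square; since the multiplier $2k(k+1)$ times any even integer already vanishes modulo $2^{m(k)}$, parity suffices. Your route is more elementary and self-contained (no appeal to Jacobi's theorem or to the divisor-function parity fact), at the cost of having to justify carefully that parity alone is enough in each congruence class of $k$; the paper's route carries slightly more information ($r_2(n)\bmod 8$) and reuses a classical formula already cited elsewhere in the text. Both yield exactly the same final case analysis.
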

 Comparing with (\ref{mod2}),  it is easy to see that this result has the advantage when $k$ is odd.

To prove this theorem, we need a simple lemma as follows.

\begin{lem}\label{2square} Let $\chi (n)$ equal to 1 if $n$ is a square and 0 otherwise, and let $r_{2}^{+}(n)$ be the number of representations of $n$ as the form ${{i}^{2}}+{{j}^{2}}$ where both $i$ and $j$ are positive integers. Then for $n\ge 1$,
\[r_{2}^{+}(n)=\frac{1}{4}{{r}_{2}}(n)-\chi (n).\]
\end{lem}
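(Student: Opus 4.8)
The plan is to prove the identity by a direct sign-and-order counting argument, relating the signed ordered representations counted by $r_2(n)$ to the strictly positive ordered representations counted by $r_2^+(n)$. Recall that $r_2(n)$ counts ordered pairs $(a,b)\in\mathbb{Z}^2$ with $a^2+b^2=n$, where $a$ and $b$ may be positive, negative, or zero, while $r_2^+(n)$ counts only those ordered pairs $(i,j)$ with $i,j\ge 1$. Since $n\ge 1$, the pair $(0,0)$ never occurs, so every representation counted by $r_2(n)$ has at least one nonzero coordinate.

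First I would split the representations counted by $r_2(n)$ into two disjoint classes: those in which both coordinates are nonzero, and those in which exactly one coordinate is zero. For the first class, the map $(a,b)\mapsto(|a|,|b|)$ sends each such representation to a pair $(i,j)$ with $i,j\ge 1$, and it is exactly four-to-one, since the four sign choices $(\pm i,\pm j)$ all give distinct elements of $\mathbb{Z}^2$ when $i,j\ge 1$. Hence this class contributes $4\,r_2^+(n)$ representations. For the second class, a representation with exactly one zero coordinate exists if and only if $n$ is a perfect square; when $n=k^2$ with $k\ge 1$, the representations are precisely $(\pm k,0)$ and $(0,\pm k)$, giving $4\chi(n)$ in total.

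Adding the two contributions yields $r_2(n)=4\,r_2^+(n)+4\chi(n)$, and dividing by $4$ and rearranging gives the claimed formula $r_2^+(n)=\tfrac14 r_2(n)-\chi(n)$. There is no genuine obstacle here; the only point requiring care is the bookkeeping of conventions---keeping straight that $r_2$ counts ordered pairs with signs while $r_2^+$ counts ordered pairs of strictly positive integers---so that the four-to-one correspondence and the count of axis solutions are applied consistently.
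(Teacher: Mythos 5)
Your proof is correct and follows essentially the same approach as the paper: a four-to-one sign correspondence between the nonzero-coordinate representations counted by $r_2(n)$ and those counted by $r_2^+(n)$, together with subtracting the four axis representations $(\pm k,0)$, $(0,\pm k)$ when $n$ is a square. The only difference is organizational (you partition by zero coordinates first rather than casing on whether $n$ is a square), which does not change the argument.
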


\begin{proof}
If $n$ is not a square, then  among the four representations  $n={{(\pm x)}^{2}}+{{(\pm y)}^{2}}$ $(xy\ne 0)$ counted by $r_{2}(n)$, there is exactly one representation $|x{{|}^{2}}+|y{{|}^{2}}$ counted by $r_{2}^{+}(n)$. Hence $r_{2}^{+}(n)={{r}_{2}}(n)/4.$

If $n={{k}^{2}}$ is a square, then we should first eliminate the four representations ${{k}^{2}}+{{0}^{2}}$, ${{(-k)}^{2}}+{{0}^{2}}$, ${{0}^{2}}+{{k}^{2}}$, ${{0}^{2}}+{{(-k)}^{2}}$ from the representations enumerated by ${{r}_{2}}(n)$. The result follows similarly by the arguments above.
\end{proof}

\begin{proof}[Proof of Theorem \ref{mod16}]
Observe that $\varphi (q)=1+2S(q)$ where $S(q)=\sum\nolimits_{n\ge 1}{{{q}^{{{n}^{2}}}}}$.  We have
\begin{displaymath}
\begin{split}
  \sum\limits_{n=0}^{\infty }{{{\overline{p}}_{k}}(n){{(-q)}^{n}}} & =\frac{1}{\varphi {{(q)}^{k}}}=\frac{1}{{{(1+2S(q))}^{k}}} \\
  & \equiv {{\Big(1-2S(q)+4S{{(q)}^{2}}-8S{{(q)}^{3}}\Big)}^{k}} \\
 & \equiv 1-2kS(q)+2k(k+1)S{{(q)}^{2}}-\frac{4}{3}k(k+1)(k+2)S{{(q)}^{3}} \pmod{16}.
\end{split}
\end{displaymath}
If $k\equiv 0,3$ (mod $4$), then  $\frac{4}{3}k(k+1)(k+2)\equiv 0$ (mod $16$). If $k\equiv 1,2$ (mod $4$), then $\frac{4}{3}k(k+1)(k+2)\equiv 0$ (mod $8$). We have
\begin{equation}\label{pkmid}
\sum\limits_{n\ge 0}{{{\overline{p}}_{k}}(n){{(-q)}^{n}}}\equiv 1-2kS(q)+2k(k+1)S{{(q)}^{2}} \pmod{{{2}^{m(k)}}}.
\end{equation}

For $n\ge 1$, by the definition of $S(q)$ and $r_{2}^{+}(n)$ we obtain
\[{{(-1)}^{n}}{{\overline{p}}_{k}}(n)\equiv 2k(k+1)r_{2}^{+}(n)-2k\chi (n) \pmod{{{2}^{m(k)}}}.\]
By Lemma \ref{2square} we can write
\begin{equation}\label{p3n16}
(-1)^{n}{{\overline{p}}_{k}}(n)\equiv \frac{1}{2}k(k+1){{r}_{2}}(n)-2k(k+2)\chi (n) \pmod{{{2}^{m(k)}}}.
\end{equation}
Let $n={{2}^{\alpha }}(2{{n}_{0}}+1)$, $\alpha ,{{n}_{0}}\ge 0$. From \cite[Theorem 3.2.1]{Bruce} we see that
\[{{r}_{2}}(n)=4\sum\limits_{\begin{smallmatrix}
 d|n \\
 d \,\mathrm{odd}
\end{smallmatrix}}{{{(-1)}^{(d-1)/2}}}\equiv 4\sum\limits_{\begin{smallmatrix}
 d|n \\
 d \,\textrm{odd}
\end{smallmatrix}}{1} \equiv 4d(2{{n}_{0}}+1) \pmod{8},\]
here $d(2{n}_{0}+1)$ denotes the number of positive divisors of $2{n}_{0}+1$. It is well-known that  $d(2{{n}_{0}}+1)$ is odd if and only if  $2{{n}_{0}}+1$ is a square.  Hence we have
\begin{equation}\label{r2n}
{{r}_{2}}(n)\equiv \left\{ \begin{array}{ll}
  4, & \textrm{$n$ is a square or twice a square}\\
  0, & \textrm{else}
\end{array}\right. \pmod{8}.
\end{equation}
Note that if $k\equiv 0,3$ (mod $4$), then $\frac{1}{2}k(k+1)\equiv 0$ (mod $2$). Combining  (\ref{p3n16}) with (\ref{r2n}), we get the desired result.
\end{proof}

Since the number of integers not exceeding $x$  which are a square or twice a square is less than $2\sqrt{x}$, we have
 \begin{cor}
Given any  integer $k\equiv 3$ \text{\rm{(mod $4$)}}, ${{\overline{p}}_{k}}(n)$ is divisible by $16$ for almost all positive integers $n$.
\end{cor}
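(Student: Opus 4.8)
The plan is to derive this immediately from Theorem \ref{mod16}. Since the hypothesis $k\equiv 3\pmod 4$ is a special case of $k\equiv 0,3\pmod 4$, the modulus in that theorem is $m(k)=4$, so Theorem \ref{mod16} already asserts $\overline{p}_k(n)\equiv 0\pmod{16}$ whenever $n$ falls under the fourth (``else'') alternative---that is, whenever $n$ is neither a square nor twice a square. Thus the set of $n$ on which divisibility by $16$ could possibly fail is contained in $\{n\ge 1: n \text{ is a square or twice a square}\}$. (On the excluded integers the residues are $\pm 2k$ and $2k(k+1)$, which for $k\equiv 3\pmod 4$ are generally nonzero mod $16$, so one makes no claim there.)

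The only remaining step is a density count, which is exactly the estimate recorded in the sentence preceding the corollary. First I would bound the exceptional set: the squares up to $x$ number $\lfloor\sqrt{x}\rfloor$ and the twice-squares up to $x$ number $\lfloor\sqrt{x/2}\rfloor$, so together they are fewer than $2\sqrt{x}$. Dividing by $x$ and letting $x\to\infty$ shows this set has natural density $0$, so its complement---on which Theorem \ref{mod16} forces $\overline{p}_k(n)\equiv 0\pmod{16}$---has density $1$. This yields the assertion that $\overline{p}_k(n)$ is divisible by $16$ for almost all $n$.

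There is essentially no obstacle here: all of the arithmetic content lives in Theorem \ref{mod16}, and the corollary merely records that the ``else'' case, which already carries the factor $16$, exhausts all but an $O(\sqrt{x})$-sized (hence density-zero) collection of integers. The single point I would state with care is the meaning of ``almost all,'' namely in the sense of natural density, which the bound $2\sqrt{x}$ makes precise.
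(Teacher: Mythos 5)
Your proposal is correct and follows exactly the paper's reasoning: the corollary is an immediate consequence of Theorem \ref{mod16} (with $m(k)=4$ since $k\equiv 3\pmod 4$), combined with the observation stated just before the corollary that the integers up to $x$ which are squares or twice squares number fewer than $2\sqrt{x}$, hence form a set of density zero. Your added care in spelling out ``almost all'' as natural density is a reasonable elaboration of what the paper leaves implicit.
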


In particular, for  $k=3$ , we have
\begin{equation}\label{p3mod16}
{{\overline{p}}_{3}}(n)\equiv \left\{ \begin{array}{ll}
   10, &\textrm{if $n$ is an even square}; \\
 6, &\textrm{if $n$ is an odd square}; \\
  8, &\textrm{if $n$ is twice a square}; \\
  0, & \textrm{else}
\end{array} \right. \pmod{16}.
\end{equation}
This leads to the following corollary.
\begin{cor}
For any nonnegative integer $n$, we have
\[{{\overline{p}}_{3}}(8n+1)\equiv 0 \pmod{2}, \quad {{\overline{p}}_{3}}(8n+2)\equiv 0 \pmod{8},\]
\[{{\overline{p}}_{3}}(8n+3)\equiv 0 \pmod{16}, \quad {{\overline{p}}_{3}}(8n+4)\equiv 0 \pmod{2},\]
\[{{\overline{p}}_{3}}(8n+5)\equiv 0 \pmod{16}, \quad {{\overline{p}}_{3}}(8n+6)\equiv 0 \pmod{16}.\]
\end{cor}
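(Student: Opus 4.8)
The plan is to read off each congruence directly from the modulo-$16$ determination (\ref{p3mod16}), which already tells us that $\overline{p}_{3}(n)$ is $10$, $6$, $8$, or $0$ modulo $16$ according as $n$ is an even square, an odd square, twice a square, or none of these. Since $8n+j$ lies in a fixed residue class modulo $8$, the entire argument reduces to deciding, for each $j \in \{1,\dots,6\}$, which of the three ``special'' forms an integer $\equiv j \pmod{8}$ is permitted to take.

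First I would record the residues modulo $8$ of the relevant quadratic forms. An odd square $(2k+1)^2 = 4k(k+1)+1$ is always $\equiv 1 \pmod{8}$, since $k(k+1)$ is even. An even square $(2k)^2 = 4k^2$ is $\equiv 0 \pmod{8}$ when $k$ is even and $\equiv 4 \pmod{8}$ when $k$ is odd, so even squares are $\equiv 0$ or $4 \pmod{8}$. Twice a square $2m^2$ is $\equiv 0 \pmod{8}$ when $m$ is even and $\equiv 2 \pmod{8}$ when $m$ is odd, so twice a square is $\equiv 0$ or $2 \pmod{8}$.

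With these residues in hand the six cases are immediate. For $j = 3, 5, 6$ none of the special forms is $\equiv j \pmod{8}$, so $8n+j$ always falls into the ``else'' branch of (\ref{p3mod16}) and $\overline{p}_{3}(8n+j) \equiv 0 \pmod{16}$. For $j = 1$ the only admissible form is an odd square, giving $\overline{p}_{3}(8n+1) \equiv 6 \equiv 0 \pmod{2}$; for $j = 4$ the only admissible form is an even square, giving $\overline{p}_{3}(8n+4) \equiv 10 \equiv 0 \pmod{2}$; and for $j = 2$ the only admissible form is twice a square, giving $\overline{p}_{3}(8n+2) \equiv 8 \equiv 0 \pmod{8}$.

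There is no genuine obstacle here: once (\ref{p3mod16}) is established the corollary is a finite case check, and the only point requiring a little care is the even-square case, where one must split according to the parity of the square root to confirm that the residue $4$ (and not merely $0$) is attained, so that $8n+4$ really can be an even square while $8n+j$ for $j=3,5,6$ cannot be any special form.
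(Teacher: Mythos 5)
Your proposal is correct and follows essentially the same route as the paper: both arguments read the corollary off from the modulo-$16$ determination \eqref{p3mod16} by checking which residue classes modulo $8$ can contain an odd square, an even square, or twice a square. Your version merely spells out the residue computations (odd squares $\equiv 1$, even squares $\equiv 0,4$, twice a square $\equiv 0,2 \pmod 8$) in slightly more detail than the paper does.
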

\begin{proof}
It follows directly from (\ref{p3mod16}) that ${{\overline{p}}_{3}}(8n+1)\equiv {{\overline{p}}_{3}}(8n+4)\equiv 0$ (mod 2).
Since any square is congruent to 0, 1 or 4 modulo 8, we know that $8n+2$ can not be a square. Hence by (\ref{p3mod16}), we deduce that
${{\overline{p}}_{3}}(8n+2)\equiv 0$  (mod 8).
Similarly, for $r \in \{3,5,6\}$, $8n+r$ cannot be a square or twice a square. Hence ${{\overline{p}}_{3}}(8n+r)\equiv 0$ (mod 16).
\end{proof}
Since
\[{{\overline{p}}_{3}}(1)=6,\quad{{\overline{p}}_{3}}(2)=24, \quad {{\overline{p}}_{3}}(3)=80={{2}^{4}}\cdot 5,\quad{{\overline{p}}_{3}}(4)=234=2\cdot 117,\]
\[{{\overline{p}}_{3}}(5)=624={{2}^{4}}\cdot 39,\quad {{\overline{p}}_{3}}(6)=1552={{2}^{4}}\cdot 97,\]
we see that all the modulus in this corollary cannot be replaced by higher powers of 2.

We have two deep Ramanujan-type congruences for ${{\overline{p}}_{3}}(n)$  modulo 32 and 64. This will be the next topic of this section. Before our discussion, we need some lemmas for preparation.

 \begin{lem}\label{keyid1}
 (Cf. \cite[Lemma 1]{Hirschhorn05}.) The following identities hold:
\[\varphi (q)=\varphi ({{q}^{4}})+2q\psi ({{q}^{8}}), \quad \varphi {{(q)}^{2}}=\varphi {{({{q}^{2}})}^{2}}+4q\psi {{({{q}^{4}})}^{2}},\]
\[\varphi (q)\varphi (-q)=\varphi {{(-{{q}^{2}})}^{2}}, \quad \psi {{(q)}^{2}}=\varphi (q)\psi ({{q}^{2}}).\]
\end{lem}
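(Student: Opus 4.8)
The plan is to treat all four identities as $2$-dissections of Ramanujan's theta series, reducing the first two to elementary reindexing of the defining sums and then obtaining the last two algebraically from the first two together with the infinite product representations. First I would record the elementary doubling fact that for any positive integer $c$,
\[\sum_{m=-\infty}^{\infty} q^{c\,m(m+1)} = 2\sum_{m\ge 0} q^{c\,m(m+1)} = 2\psi(q^{2c}),\]
which holds because the involution $m\mapsto -m-1$ fixes $m(m+1)$ and pairs the integers, and because $\psi(q) = \sum_{n\ge 0} q^{n(n+1)/2}$. For the first identity I would split $\varphi(q) = \sum_{n\in\mathbb{Z}} q^{n^2}$ according to the parity of $n$: the even terms $n=2m$ give $\sum_m q^{4m^2} = \varphi(q^4)$, while the odd terms $n = 2m+1$ give $q\sum_m q^{4m(m+1)} = 2q\psi(q^8)$ by the doubling fact with $c=4$. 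This yields $\varphi(q) = \varphi(q^4) + 2q\psi(q^8)$.

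For the second identity I would write $\varphi(q)^2 = \sum_{(i,j)\in\mathbb{Z}^2} q^{i^2+j^2}$ and apply the rotation $(i,j)\mapsto(u,v) = (i+j,\, i-j)$, a bijection from $\mathbb{Z}^2$ onto the sublattice of pairs of equal parity, under which $i^2+j^2 = (u^2+v^2)/2$. Summing over $u,v$ both even recovers $\varphi(q^2)^2$, while $u,v$ both odd contributes $q\big(\sum_a q^{2a(a+1)}\big)^2 = 4q\psi(q^4)^2$, again by the doubling fact (now with $c=2$). Hence $\varphi(q)^2 = \varphi(q^2)^2 + 4q\psi(q^4)^2$.

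The third identity then follows by multiplying the two instances $\varphi(\pm q) = \varphi(q^4) \pm 2q\psi(q^8)$ of the first identity, giving $\varphi(q)\varphi(-q) = \varphi(q^4)^2 - 4q^2\psi(q^8)^2$, and recognizing the right-hand side as the value of the second identity at $x = -q^2$, namely $\varphi(-q^2)^2$. For the fourth identity I would pass to the infinite product (eta-quotient) forms: from $\psi(q) = (q^2;q^2)_\infty^2/(q;q)_\infty$ together with the standard $\varphi(q) = (q^2;q^2)_\infty^5/\big((q;q)_\infty^2(q^4;q^4)_\infty^2\big)$, a one-line cancellation gives $\varphi(q)\psi(q^2) = (q^2;q^2)_\infty^4/(q;q)_\infty^2 = \psi(q)^2$.

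Since these are classical Gauss--Jacobi-type identities, there is no genuine obstacle; the only care required is the bookkeeping in the dissections --- in particular, not dropping the factor $2$ from the doubling fact, and correctly handling the equal-parity constraint in the change of variables for the second identity. I would also verify the eta-quotient for $\varphi(q)$ used in the fourth identity, which follows from $\varphi(q) = (-q;q^2)_\infty^2(q^2;q^2)_\infty$ and the elementary relations among $(q;q)_\infty$, $(q^2;q^2)_\infty$, and $(q^4;q^4)_\infty$.
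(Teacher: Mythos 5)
Your proof is correct in all four parts: the parity split for $\varphi(q)$, the rotation $(i,j)\mapsto(i+j,i-j)$ onto the equal-parity sublattice for $\varphi(q)^2$, the product of $\varphi(\pm q)$ combined with the second identity at $-q^2$, and the eta-quotient cancellation for $\psi(q)^2=\varphi(q)\psi(q^2)$ all check out, including the doubling factor $\sum_{m\in\mathbb{Z}}q^{cm(m+1)}=2\psi(q^{2c})$ and the product representation $\varphi(q)=(q^2;q^2)_\infty^5/\bigl((q;q)_\infty^2(q^4;q^4)_\infty^2\bigr)$. There is nothing in the paper to compare against: the author does not prove this lemma but simply cites it as Lemma 1 of Hirschhorn and Sellers, so your argument supplies a self-contained elementary verification of a result the paper takes as known. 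The only stylistic remark is that the fourth identity can also be done without product manipulations, by writing $\psi(q)^2=\sum_{m,n\ge 0}q^{(m(m+1)+n(n+1))/2}$ and dissecting via $(u,v)=(m+n+1,\,m-n)$, but your eta-quotient route is shorter and equally rigorous.
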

\begin{lem}\label{keyid2}
(Cf. \cite[Entry 25(viii)]{notebook}.)We have
\[\varphi {{(q)}^{4}}-\varphi {{(-q)}^{4}}=16q\psi {{({{q}^{2}})}^{4}}.\]
\end{lem}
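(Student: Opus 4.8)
The plan is to avoid any direct series computation and instead derive the identity purely algebraically from the four relations collected in Lemma~\ref{keyid1}. The starting observation is that the left-hand side factors as a difference of squares,
\[
\varphi(q)^4-\varphi(-q)^4=\bigl(\varphi(q)^2-\varphi(-q)^2\bigr)\bigl(\varphi(q)^2+\varphi(-q)^2\bigr),
\]
so it suffices to compute the two factors separately. For this I would invoke the second identity of Lemma~\ref{keyid1}, namely $\varphi(q)^2=\varphi(q^2)^2+4q\psi(q^4)^2$, and its reflection obtained by replacing $q$ with $-q$.

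The key point in the reflection step is that both $\varphi(q^2)$ and $\psi(q^4)$ are even functions of $q$ (since $\varphi$ and $\psi$ have series supported on $q^{n^2}$ and $q^{n(n+1)/2}$ respectively, so $q\mapsto -q$ leaves $\varphi(q^2)$ and $\psi(q^4)$ unchanged), while only the explicit factor $4q$ flips sign. This gives $\varphi(-q)^2=\varphi(q^2)^2-4q\psi(q^4)^2$, and hence
\[
\varphi(q)^2+\varphi(-q)^2=2\varphi(q^2)^2,\qquad \varphi(q)^2-\varphi(-q)^2=8q\psi(q^4)^2.
\]
Multiplying these two expressions yields
\[
\varphi(q)^4-\varphi(-q)^4=16q\,\varphi(q^2)^2\psi(q^4)^2,
\]
which already isolates the factor $16q$ and reduces the problem to identifying $\varphi(q^2)^2\psi(q^4)^2$ with $\psi(q^2)^4$.

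The final step is to apply the fourth identity of Lemma~\ref{keyid1}, $\psi(q)^2=\varphi(q)\psi(q^2)$, after the substitution $q\mapsto q^2$, which gives $\psi(q^2)^2=\varphi(q^2)\psi(q^4)$. Squaring this produces exactly $\psi(q^2)^4=\varphi(q^2)^2\psi(q^4)^2$, and substituting back completes the identity. I do not expect a genuine obstacle here: the argument is a short chain of substitutions into Lemma~\ref{keyid1}, and the only place requiring care is the parity bookkeeping in the $q\mapsto -q$ reflection, to be certain which factors are even and which change sign. An alternative route would be to expand everything in terms of the Dedekind eta / Weierstrass quotients and match Fourier coefficients, but that is more laborious and the factorization approach above is both cleaner and entirely self-contained given the preceding lemma.
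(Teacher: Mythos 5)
Your argument is correct: the difference-of-squares factorization, the reflection $q\mapsto -q$ of the $2$-dissection $\varphi(q)^2=\varphi(q^2)^2+4q\psi(q^4)^2$ (the parity bookkeeping is right, since $\varphi(q^2)$ and $\psi(q^4)$ involve only even powers of $q$), and the substitution $q\mapsto q^2$ in $\psi(q)^2=\varphi(q)\psi(q^2)$ combine exactly as you claim to give $\varphi(q)^4-\varphi(-q)^4=16q\,\varphi(q^2)^2\psi(q^4)^2=16q\,\psi(q^2)^4$. Note, however, that the paper does not prove this lemma at all: it is stated with a bare citation to Entry 25(viii) of Berndt's \emph{Ramanujan's Notebooks, Part III}, so there is no ``paper proof'' to compare against. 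What your derivation buys is self-containedness: it shows the identity is a formal consequence of the four dissection identities already recorded in Lemma~\ref{keyid1}, requiring no new input beyond that lemma, whereas the cited Entry 25(viii) is established in Berndt by other means. Either route is acceptable; yours is arguably preferable in context since Lemma~\ref{keyid1} is quoted immediately beforehand and all the needed pieces are already on the table.
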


 \begin{lem}\label{keyid3}
 (Cf. \cite[Lemma 2.1]{Chen}.) We have
\[\frac{1}{\varphi (-q)}=\frac{1}{\varphi {{(-{{q}^{4}})}^{4}}}\Big(\varphi {{({{q}^{4}})}^{3}}+2q\varphi {{({{q}^{4}})}^{2}}\psi ({{q}^{8}})+4{{q}^{2}}\varphi ({{q}^{4}})\psi {{({{q}^{8}})}^{2}}+8{{q}^{3}}\psi {{({{q}^{8}})}^{3}}\Big).\]
\end{lem}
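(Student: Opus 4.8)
The plan is to obtain $1/\varphi(-q)$ by rationalizing the $2$-dissection of $\varphi$. First I would record the $2$-dissection $\varphi(q)=\varphi(q^4)+2q\psi(q^8)$ from Lemma \ref{keyid1} and replace $q$ by $-q$; since the fourth-power argument is unchanged and $\psi(q^8)$ is unaffected while the middle term changes sign, this yields $\varphi(-q)=\varphi(q^4)-2q\psi(q^8)$. Writing $A=\varphi(q^4)$ and $B=2q\psi(q^8)$, the problem becomes computing $1/(A-B)$.

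The algebraic heart is the elementary factorization identity
\[\frac{1}{A-B}=\frac{A^3+A^2B+AB^2+B^3}{A^4-B^4},\]
whose numerator, upon substituting $A=\varphi(q^4)$ and $B=2q\psi(q^8)$, is precisely
\[\varphi(q^4)^3+2q\varphi(q^4)^2\psi(q^8)+4q^2\varphi(q^4)\psi(q^8)^2+8q^3\psi(q^8)^3,\]
the bracketed expression in the statement. So everything reduces to showing that the denominator collapses to $\varphi(-q^4)^4$.

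For the denominator I would factor $A^4-B^4=(A-B)(A+B)(A^2+B^2)$ and evaluate each factor through Lemma \ref{keyid1}. Since $A+B=\varphi(q)$, the first two factors give $(A-B)(A+B)=\varphi(q)\varphi(-q)=\varphi(-q^2)^2$ by the third identity of that lemma. For the third factor, applying the second identity of Lemma \ref{keyid1} with $q$ replaced by $q^2$ gives $\varphi(q^2)^2=\varphi(q^4)^2+4q^2\psi(q^8)^2=A^2+B^2$. Hence $A^4-B^4=\varphi(-q^2)^2\,\varphi(q^2)^2=\big(\varphi(q^2)\varphi(-q^2)\big)^2$, and one final application of the third identity of Lemma \ref{keyid1} with $q$ replaced by $q^2$, namely $\varphi(q^2)\varphi(-q^2)=\varphi(-q^4)^2$, turns this into $\varphi(-q^4)^4$. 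Dividing numerator by denominator gives the claim.

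There is no serious obstacle here: the argument is a purely formal chain of the four identities collected in Lemma \ref{keyid1}, and the only point requiring care is the bookkeeping in the denominator, that is, keeping track of which identity is applied at $q$ versus at $q^2$ so that the three factors telescope correctly into a single fourth power of $\varphi(-q^4)$.
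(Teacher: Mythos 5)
Your proof is correct: the rationalization $\tfrac{1}{A-B}=\tfrac{A^3+A^2B+AB^2+B^3}{A^4-B^4}$ with $A=\varphi(q^4)$, $B=2q\psi(q^8)$, followed by the factorization $A^4-B^4=\varphi(q)\varphi(-q)\cdot\varphi(q^2)^2=\varphi(-q^4)^4$ via the identities of Lemma \ref{keyid1}, is exactly the standard argument (and the one in the cited source, Chen--Lin, Lemma 2.1). The paper itself offers no proof of this lemma, only the citation, so your derivation correctly and completely fills in what is omitted.
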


\begin{thm}\label{p38n7}
 For any integer $ n \ge 0$, we have
\[{{\overline{p}}_{3}}(16n+14)\equiv 0 \pmod{32}.\]
\end{thm}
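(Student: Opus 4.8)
The plan is to start from the generating function $\sum_{n\ge0}\overline{p}_3(n)q^n=\varphi(-q)^{-3}$ and to cube the $2$-dissection of Lemma \ref{keyid3}. Writing $a=\varphi(q^4)$ and $b=\psi(q^8)$, that lemma reads $\varphi(-q)^{-1}=\varphi(-q^4)^{-4}\sum_{e=0}^{3}2^{e}q^{e}a^{3-e}b^{e}$, so cubing gives $\varphi(-q)^{-3}=\varphi(-q^4)^{-12}G(q)$. Since both $a$ and $b$ are series in $q^4$, the coefficient $2^{e_1+e_2+e_3}$ and the monomial $a^{9-e_1-e_2-e_3}b^{e_1+e_2+e_3}$ of a general term in the cube depend only on $s=e_1+e_2+e_3$, so collecting the explicit powers of $q$ yields the clean form $G(q)=\sum_{s=0}^{9}N(s)\,2^{s}q^{s}a^{9-s}b^{s}$, where $N(s)$ is the coefficient of $x^{s}$ in $(1+x+x^2+x^3)^3$.

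First I would isolate the arithmetic progression. Because $\varphi(-q^4)^{-12}$, $a$ and $b$ are all series in $q^4$, the residue of the exponent modulo $4$ is governed solely by $q^{s}$, and $16n+14\equiv 2\pmod 4$ forces $s\in\{2,6\}$. The surviving contribution is $\varphi(-q^4)^{-12}\big(24\,q^{2}a^{7}b^{2}+640\,q^{6}a^{3}b^{6}\big)$, coming from $N(2)2^{2}=24$ and $N(6)2^{6}=640$. As $640$ is divisible by $32$, that term may be discarded modulo $32$. Replacing $q^4$ by $Q$ (so that $a=\varphi(Q)$ and $b=\psi(Q^2)$) and matching $16n+14=2+4(4n+3)$, the problem reduces to showing that the coefficient of $Q^{4n+3}$ in $24\,\varphi(Q)^{7}\psi(Q^2)^{2}\varphi(-Q)^{-12}$ is divisible by $32$. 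Since $24=2^{3}\cdot 3$, it suffices to prove that the coefficient of $Q^{4n+3}$ in $\varphi(Q)^{7}\psi(Q^2)^{2}\varphi(-Q)^{-12}$ is divisible by $4$.

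To extract the last two factors of $2$ I would pass to congruences modulo $4$. Because $\varphi(\pm Q)^{2}\equiv 1\pmod 4$ (using $\varphi(q)=1+2\sum_{n\ge1}q^{n^2}$), one has $\varphi(-Q)^{-12}\equiv 1$ and $\varphi(Q)^{7}\equiv\varphi(Q)$, so the series is congruent to $\varphi(Q)\psi(Q^2)^{2}$ modulo $4$. Now I apply Lemma \ref{keyid1} in the forms $\varphi(Q)=\varphi(Q^4)+2Q\psi(Q^8)$ and $\psi(Q^2)^{2}=\varphi(Q^2)\psi(Q^4)$. The term $\varphi(Q^4)\varphi(Q^2)\psi(Q^4)$ is a series in $Q^2$ and contributes nothing to the odd exponent $4n+3$, while the term $2Q\,\psi(Q^8)\varphi(Q^2)\psi(Q^4)$ contributes $2$ times the coefficient of $Q^{4n+2}$ in $\psi(Q^8)\varphi(Q^2)\psi(Q^4)$. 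Setting $R=Q^2$, that inner coefficient is the coefficient of $R^{2n+1}$ in $\varphi(R)\psi(R^2)\psi(R^4)$; modulo $2$ the factor $\varphi(R)\equiv 1$ drops out, and since every exponent of $\psi(R^2)\psi(R^4)$ has the shape $i(i+1)+2j(j+1)$ and is therefore even, the coefficient at the odd index $2n+1$ is even. This furnishes the extra factor of $2$, giving the coefficient of $Q^{4n+3}$ in $\varphi(Q)\psi(Q^2)^{2}$ as $\equiv 0\pmod 4$, which completes the chain.

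The main obstacle is the careful $2$-adic bookkeeping across the two successive dissections: one must be sure that exactly the right powers of $2$ are harvested at each stage, neither overcounting (which would weaken the modulus) nor losing a factor. The genuine crux is the final parity observation that $\psi(R^2)\psi(R^4)$ is supported on even exponents; this is what supplies the last factor of $2$ needed to lift the accumulated $2^{3}$ from the coefficient $24$ up to the full modulus $32$.
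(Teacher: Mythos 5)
Your proof is correct and follows essentially the same route as the paper: cube the $2$-dissection of Lemma \ref{keyid3}, extract the residue class $2 \pmod 4$ to isolate the term $24q^2\varphi(q^4)^7\psi(q^8)^2$, then gain the last factor of $2$ from a second dissection combined with $\psi(q)^2=\varphi(q)\psi(q^2)$ and a parity-of-support argument. The only cosmetic difference is that you replace the paper's appeal to Lemma \ref{keyid2} (i.e.\ $\varphi(-q)^4\equiv \varphi(q)^4 \pmod{16}$) by the more elementary observation $\varphi(\pm q)^2\equiv 1 \pmod 4$, which indeed suffices because of the factor $24=2^3\cdot 3$ already in hand.
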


\begin{proof}
Throughout this section, we will denote $a=\varphi ({{q}^{4}})$ and $b=\psi ({{q}^{8}})$. By Lemma \ref{keyid1}, we have $\varphi (q)=a+2qb$.  By Lemma \ref{keyid3}, we have
\begin{equation}\label{p3mod2expan}
\sum\limits_{n\ge 0}{{{\overline{p}}_{3}}(n){{q}^{n}}}=\frac{1}{\varphi {{(-q)}^{3}}}=\frac{1}{\varphi {{(-{{q}^{4}})}^{12}}}{{\big({{a}^{3}}+2q{{a}^{2}}b+4{{q}^{2}}a{{b}^{2}}+8{{q}^{3}}{{b}^{3}}\big)}^{3}}.
\end{equation}
If we collect all the terms of the form ${{q}^{4n+2}}$ in the expansion of $({{a}^{3}}+2q{{a}^{2}}b+4{{q}^{2}}a{{b}^{2}}$\\
$+8{{q}^{3}}{{b}^{3}})^{3}$, we get \[{{q}^{2}}(24{{a}^{7}}{{b}^{2}}+640{{a}^{3}}{{b}^{6}}{{q}^{4}})\equiv 24{{q}^{2}}{{a}^{7}}{{b}^{2}} \pmod{{{2}^{7}}}.\]
Collecting all the terms of the form ${{q}^{4n+2}}$ on both sides of  (\ref{p3mod2expan}), dividing by $q^{2}$ and replacing $q^{4}$ by $q$, we obtain
\begin{equation}\label{term11}
\sum\limits_{n\ge 0}{{{\overline{p}}_{3}}(4n+2){{q}^{n}}}\equiv 24\frac{\varphi {{(q)}^{7}}\psi {{({{q}^{2}})}^{2}}}{\varphi {{(-q)}^{12}}} \pmod{{{2}^{7}}}.
\end{equation}
By Lemma \ref{keyid2}, we have $\varphi {{(-q)}^{4}}\equiv \varphi {{(q)}^{4}}$ (mod ${{2}^{4}}$). Hence from (\ref{term11}) we deduce that
\begin{equation}\label{term12}
\sum\limits_{n\ge 0}{{{\overline{p}}_{3}}(4n+2){{q}^{n}}}\equiv 24\frac{\varphi {{(q)}^{7}}\psi {{({{q}^{2}})}^{2}}}{\varphi {{(q)}^{12}}}=24\frac{\psi {{({{q}^{2}})}^{2}}}{\varphi {{(q)}^{5}}} \pmod{{{2}^{7}}}.
\end{equation}

Because $(a+2qb)^{6}\equiv {{a}^{6}}$ (mod $4$), we get $\varphi {{(q)}^{6}}\equiv \varphi {{({{q}^{4}})}^{6}}$ (mod $4$). Thus from (\ref{term12}) we obtain
\[\sum\limits_{n\ge 0}{{{\overline{p}}_{3}}(4n+2){{q}^{n}}}\equiv 24\frac{\psi {{({{q}^{2}})}^{2}}}{\varphi {{(q)}^{6}}}\varphi (q)\equiv 24\frac{\psi {{({{q}^{2}})}^{2}}}{\varphi {{({{q}^{4}})}^{6}}}\Big(\varphi ({{q}^{4}})+2q\psi ({{q}^{8}})\Big) \pmod{{{2}^{5}}}.\]
Collecting all the terms of the form ${{q}^{2n+1}}$ on both sides, dividing by $q$ and replacing ${{q}^{2}}$ by $q$, we obtain
\[\sum\limits_{n\ge 0}{{{\overline{p}}_{3}}(8n+6){{q}^{n}}} \equiv 16\frac{{\psi (q)}^{2}\psi ({{q}^{4}})}{\varphi {{({{q}^{2}})}^{6}}} \equiv 16\frac{\psi ({{q}^{2}})\psi ({{q}^{4}})}{\varphi {{({{q}^{2}})}^{6}}} \pmod{32}.\]
Note that the terms of the form ${{q}^{2n+1}}$ do not appear in the right hand side, we deduce that ${{\overline{p}}_{3}}\big(8(2n+1)+6\big)\equiv 0$ (mod  $32$), which is the desired result we want.
\end{proof}

\begin{rem}
The modulus 32 cannot be replaced by 64 since for $n=0$, we have ${{\overline{p}}_{3}}(14)=535008={{2}^{5}}\cdot 3\cdot 5573$.
\end{rem}

\begin{thm}
 For any integer $ n \ge 0$, we have
\[{{\overline{p}}_{3}}(8n+7)\equiv 0 \pmod{64}.\]
\end{thm}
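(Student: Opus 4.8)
The plan is to imitate the successful dissection carried out in the proof of Theorem \ref{p38n7}, but now extracting the residue class $7 \pmod 8$ from the generating function and chasing it two levels deep. Starting from the expansion (\ref{p3mod2expan}),
\[
\sum_{n\ge 0}\overline{p}_3(n)q^n=\frac{1}{\varphi(-q^4)^{12}}\big(a^3+2qa^2b+4q^2ab^2+8q^3b^3\big)^3,
\]
with $a=\varphi(q^4)$, $b=\psi(q^8)$, I would first collect the terms of the form $q^{4n+3}$. Since $\varphi(-q^4)^{12}$ depends only on $q^4$, this amounts to picking out the $q^3$-homogeneous piece of the cube; the relevant monomials are those whose $q$-exponent is $\equiv 3 \pmod 4$, namely the contributions from $(2qa^2b)^3$, cross terms like $a^3\cdot 8q^3b^3$ and $2qa^2b\cdot 4q^2ab^2$, and so on. After dividing by $q^3$ and replacing $q^4$ by $q$, this yields a congruence
\[
\sum_{n\ge 0}\overline{p}_3(4n+3)q^n\equiv c\,\frac{\varphi(q)^{s}\psi(q^2)^{t}}{\varphi(-q)^{12}}\pmod{2^{k}}
\]
for explicit small integers $c,s,t,k$, exactly as (\ref{term11}) arose in the even-square case.

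Next I would use Lemma \ref{keyid2} in the form $\varphi(-q)^4\equiv\varphi(q)^4\pmod{2^4}$ to replace $\varphi(-q)^{12}$ by $\varphi(q)^{12}$ in the denominator, simplifying the right-hand side to something of the shape $c\,\psi(q^2)^{t}/\varphi(q)^{r}$. Then, invoking $\varphi(q)=\varphi(q^4)+2q\psi(q^8)$ from Lemma \ref{keyid1} together with the observation (used already above) that $(a+2qb)^{6}\equiv a^6\pmod 4$, I can reduce the power of $\varphi(q)$ in the denominator to a power of $\varphi(q^4)$ modulo a suitable power of $2$, leaving a single numerator factor of $\varphi(q)=\varphi(q^4)+2q\psi(q^8)$ to be split. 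I would then extract the appropriate residue class modulo $2$ in the $q$-exponent (dividing by $q$ and replacing $q^2$ by $q$) to pass from $\overline{p}_3(4n+3)$ to $\overline{p}_3(8n+7)$, tracking how the constant $c$ accumulates factors of $2$ at each dissection. The goal is to show the surviving constant is divisible by $64$, or else that the relevant residue class simply does not appear on the right-hand side, as happened at the very end of the proof of Theorem \ref{p38n7}.

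The main obstacle I expect is bookkeeping the powers of $2$ accurately enough to reach the modulus $64$ rather than only $32$. In the even case the chain of reductions lost precision (dropping from $2^7$ down to $32$) because each application of Lemmas \ref{keyid1} and \ref{keyid2} is only valid modulo a fixed power of $2$; to reach $64=2^6$ here I must either start with a larger initial modulus in the $q^{4n+3}$ extraction (the leading constant from the cube may already carry several factors of $2$, which helps) or arrange the final dissection so that the target class vanishes identically, thereby giving the congruence for free regardless of the constant. Choosing which of the intermediate $\varphi$-to-$\varphi(q^4)$ replacements to apply, and in what order, so as not to degrade the $2$-adic precision below $2^6$, is the delicate part; I would keep the modulus as high as possible at each step and only invoke Lemma \ref{keyid2} when its modulus $2^4$ is still compatible with the overall target. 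As a final consistency check I would verify the result against the remark's data point, confirming for instance that $\overline{p}_3(7)$ is divisible by $64$.
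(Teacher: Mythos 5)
Your plan is correct and follows essentially the same route as the paper: extracting the $q^{4n+3}$ part of $(a^3+2qa^2b+4q^2ab^2+8q^3b^3)^3$ gives $80q^3a^6b^3$ modulo $2^8$, and since $80=2^4\cdot 5$ combined with $\varphi(\pm q)^6\equiv\varphi(q^4)^6\pmod{4}$ yields $\sum_{n\ge 0}\overline{p}_3(4n+3)q^n\equiv 80\,\psi(q^2)^3/\varphi(q^4)^6\pmod{2^6}$, the odd powers of $q$ are simply absent and the congruence follows --- precisely your second anticipated endgame. The only cosmetic difference is that the paper never needs Lemma \ref{keyid2} at this stage; the mod-$4$ reduction of the sixth powers suffices because the leading constant already supplies $2^4$ of the required $2^6$.
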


\begin{proof}
We start from (\ref{p3mod2expan}). If we collect all the terms of the form ${{q}^{4n+3}}$ in the expansion of ${{({{a}^{3}}+2q{{a}^{2}}b+4{{q}^{2}}a{{b}^{2}}+8{{q}^{3}}{{b}^{3}})}^{3}}$,
we get
\[{{q}^{3}}(80{{a}^{6}}{{b}^{3}}+768{{a}^{2}}{{b}^{7}}{{q}^{4}})\equiv 80{{q}^{3}}{{a}^{6}}{{b}^{3}} \pmod{{{2}^{8}}}.\]
Collecting all  the terms of the form ${{q}^{4n+3}}$ on both sides of  (\ref{p3mod2expan}),  then dividing  by ${{q}^{3}}$ and replacing ${{q}^{4}}$ by $q$, we  obtain
\begin{equation}\label{term21}
\sum\limits_{n\ge 0}{{{\overline{p}}_{3}}(4n+3){{q}^{n}}}\equiv 80\frac{\varphi {{(q)}^{6}}\psi {{({{q}^{2}})}^{3}}}{\varphi {{(-q)}^{12}}} \pmod{{{2}^{8}}}.
\end{equation}
In the proof of Theorem \ref{p38n7} we have seen that  $\varphi {{(q)}^{6}}\equiv \varphi {{({{q}^{4}})}^{6}}$ (mod $4$). Hence $\varphi {{(-q)}^{6}}\equiv \varphi {{({{q}^{4}})}^{6}}\equiv \varphi {{(q)}^{6}}$ (mod 4). We deduce from (\ref{term21}) that
\[\sum\limits_{n\ge 0}{{{\overline{p}}_{3}}(4n+3){{q}^{n}}}\equiv 80\frac{\psi {{({{q}^{2}})}^{3}}}{\varphi {{(q)}^{6}}}\equiv 80\frac{\psi {{({{q}^{2}})}^{3}}}{\varphi {{({{q}^{4}})}^{6}}} \pmod{{{2}^{6}}}.\]
Since the terms of the form ${{q}^{2n+1}}$ do not appear in the right hand side, we deduce that ${{\overline{p}}_{3}}(4(2n+1)+3)\equiv 0$ (mod $64$). This completes the proof.
\end{proof}
\begin{rem}
The modulus 64 in this theorem  cannot be replaced by 128 as we have ${{\overline{p}}_{3}}(7)=3648={{2}^{6}}\cdot 3\cdot 19$.
\end{rem}

\section{Congruences Modulo 9}
 First we state two lemmas which will play a key role in our argument.
\begin{lem}\label{identity1}
(Cf. \cite[Lemmas 2.6 and 2.7]{ped}.)
The following identities hold:
\[\varphi (q)=\varphi ({{q}^{9}})+2qB(-{{q}^{3}}),\]
\[\varphi {{({{q}^{3}})}^{3}}+8qB{{(-q)}^{3}}=\frac{\varphi {{(q)}^{4}}}{\varphi ({{q}^{3}})},\]
where
\[B(q)=\frac{{{(q;q)}_{\infty }}({{q}^{6}};{{q}^{6}})_{\infty }^{2}}{{{({{q}^{2}};{{q}^{2}})}_{\infty }}{{({{q}^{3}};{{q}^{3}})}_{\infty }}}.\]
\end{lem}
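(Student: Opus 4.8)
Both identities are $3$-dissection statements for $\varphi$, and the plan is to prove them in sequence, using the first to bootstrap the second.

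For the first identity, I would dissect $\varphi(q)=\sum_{n=-\infty}^{\infty}q^{n^2}$ according to the residue of $n$ modulo $3$. The terms with $3\mid n$, say $n=3m$, contribute $\sum_{m}q^{9m^2}=\varphi(q^9)$. The terms with $n=3m\pm1$ contribute $\sum_{m}q^{9m^2\pm6m+1}$, and after replacing $m$ by $-m$ in one of the two sums they coincide, giving $2q\sum_{m}q^{9m^2+6m}$. It then remains to identify $\sum_{m}q^{9m^2+6m}$ with $B(-q^3)$. I would recognize the left side as Ramanujan's general theta function $f(q^{15},q^3)=\sum_{n}q^{9n^2+6n}$, apply the Jacobi triple product to write it as $(-q^{3};q^{18})_\infty(-q^{15};q^{18})_\infty(q^{18};q^{18})_\infty$, then substitute $q\to-q^3$ into the defining product for $B$ and rearrange the resulting infinite products, using $(-x;-x)_\infty=(x^2;x^2)_\infty^3/[(x;x)_\infty(x^4;x^4)_\infty]$ to clear the negative signs, until the two products match.

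For the second identity, the key observation is that the first identity is really a statement about the three cube roots of a single variable. Writing $\omega=e^{2\pi i/3}$ and putting $u=q^{1/3}$, the quantities $\varphi(u^9)=\varphi(q^3)$ and $B(-u^3)=B(-q)$ are unchanged when $u$ is replaced by $\omega u$ or $\omega^2 u$, while the middle term acquires a factor $\omega^{j}$. Thus, setting $X=\varphi(q^3)$ and $Y=2q^{1/3}B(-q)$, the first identity gives $\varphi(\omega^{j}u)=X+\omega^{j}Y$ for $j=0,1,2$. Multiplying these three relations and using $(X+Y)(X+\omega Y)(X+\omega^2 Y)=X^3+Y^3$, I obtain
\[\varphi(q^{1/3})\,\varphi(\omega q^{1/3})\,\varphi(\omega^2 q^{1/3})=X^3+Y^3=\varphi(q^3)^3+8qB(-q)^3,\]
which is precisely the left-hand side of the identity I want.

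It therefore remains to evaluate the product $\varphi(q^{1/3})\varphi(\omega q^{1/3})\varphi(\omega^2 q^{1/3})$ independently and show it equals $\varphi(q)^4/\varphi(q^3)$; this is the step I expect to be the main obstacle. Writing $\varphi$ as the quotient $\varphi(u)=(u^2;u^2)_\infty^5/[(u;u)_\infty^2(u^4;u^4)_\infty^2]$ and collecting the three cube-root arguments factor by factor, the computation rests on the elementary factorization $\prod_{j=0}^{2}(1-\omega^{jn}x^{n})$, which equals $(1-x^{n})^3$ when $3\mid n$ and $1-x^{3n}$ otherwise. This yields $\prod_{j=0}^{2}(\omega^{j}w;\omega^{j}w)_\infty=(w^3;w^3)_\infty^4/(w^9;w^9)_\infty$ for each relevant base $w\in\{u,u^2,u^4\}$ (all coprime to $3$), and substituting $u^3=q$ throughout reduces the product to $\varphi(q)^4/\varphi(q^3)$ after cancellation. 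The only care needed is bookkeeping of the many products and confirming that the formal manipulations involving $q^{1/3}$ and $\omega$ are legitimate, which they are since every surviving expression is an honest power series in $q$.
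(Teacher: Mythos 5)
Your proposal is correct, but it is worth noting that the paper itself offers no proof of this lemma at all: it is quoted verbatim from Andrews--Hirschhorn--Sellers (Lemmas 2.6 and 2.7 of the cited reference), so you are supplying an argument where the paper relies on a citation. Your argument is sound and is essentially the standard derivation. For the first identity, the $3$-dissection of $\sum_n q^{n^2}$ by $n\bmod 3$ correctly yields $\varphi(q^9)+2q\sum_m q^{9m^2+6m}$, and the identification $\sum_m q^{9m^2+6m}=f(q^{15},q^3)=(-q^3;q^{18})_\infty(-q^{15};q^{18})_\infty(q^{18};q^{18})_\infty=B(-q^3)$ checks out: both sides reduce to $(q^6;q^6)_\infty^2(q^9;q^9)_\infty(q^{36};q^{36})_\infty/[(q^3;q^3)_\infty(q^{12};q^{12})_\infty(q^{18};q^{18})_\infty]$ after clearing the negative arguments with $(-x;-x)_\infty=(x^2;x^2)_\infty^3/[(x;x)_\infty(x^4;x^4)_\infty]$. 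For the second identity, the root-of-unity trick is exactly right: the first identity at $u,\omega u,\omega^2 u$ (with $u^3=q$) gives $\varphi(\omega^j u)=X+\omega^j Y$, the product telescopes to $X^3+Y^3=\varphi(q^3)^3+8qB(-q)^3$, and the product formula $\prod_{j}(\omega^j w;\omega^j w)_\infty=(w^3;w^3)_\infty^4/(w^9;w^9)_\infty$ applied to the eta-quotient $\varphi(u)=(u^2;u^2)_\infty^5/[(u;u)_\infty^2(u^4;u^4)_\infty^2]$ does produce $(q^2;q^2)_\infty^{20}(q^3;q^3)_\infty^2(q^{12};q^{12})_\infty^2/[(q;q)_\infty^8(q^4;q^4)_\infty^8(q^6;q^6)_\infty^5]=\varphi(q)^4/\varphi(q^3)$; the one point to state explicitly (which you gesture at) is that as $j$ runs over $0,1,2$ the twists $\omega^{2j}$ and $\omega^{4j}$ also run over all cube roots of unity, so the product formula applies to each of the three bases $u,u^2,u^4$. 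Your proof is self-contained and elementary, which is a genuine gain over the paper's bare citation; the cost is only the product bookkeeping you already anticipate.
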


\begin{lem}\label{identity2}
(Cf. \cite[Lemma 2.1]{Chen}.) We have
\[\frac{1}{\varphi (q)}=\frac{\varphi ({{q}^{9}})}{\varphi {{({{q}^{3}})}^{4}}}\Big(\varphi {{({{q}^{9}})}^{2}}-2q\varphi ({{q}^{9}})B({{-q}^{3}})+4{{q}^{2}}B{{({{-q}^{3}})}^{2}}\Big).\]
\end{lem}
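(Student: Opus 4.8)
The plan is to reduce the claimed identity to the two relations already recorded in Lemma~\ref{identity1}, using nothing beyond the algebraic factorization of a sum of cubes. To lighten the notation I would abbreviate $A=\varphi(q^9)$ and $C=B(-q^3)$. The first identity of Lemma~\ref{identity1} then reads $\varphi(q)=A+2qC$, and the quadratic factor appearing on the right-hand side of the statement is exactly $A^2-2qAC+4q^2C^2$.

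The first step is to observe that this quadratic is the cofactor in the factorization $x^3+y^3=(x+y)(x^2-xy+y^2)$ with $x=A$ and $y=2qC$. Multiplying it by $\varphi(q)=A+2qC$ therefore collapses the two factors into a sum of cubes:
\[\varphi(q)\big(A^2-2qAC+4q^2C^2\big)=A^3+8q^3C^3=\varphi(q^9)^3+8q^3B(-q^3)^3.\]

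The second step is to identify this cubic with a known quantity. Replacing $q$ by $q^3$ in the second identity of Lemma~\ref{identity1}, namely $\varphi(q^3)^3+8qB(-q)^3=\varphi(q)^4/\varphi(q^3)$, yields precisely
\[\varphi(q^9)^3+8q^3B(-q^3)^3=\frac{\varphi(q^3)^4}{\varphi(q^9)}.\]
Combining the two displays gives $\varphi(q)\big(A^2-2qAC+4q^2C^2\big)=\varphi(q^3)^4/\varphi(q^9)$, and multiplying both sides by $\varphi(q^9)/\big(\varphi(q)\,\varphi(q^3)^4\big)$ produces the asserted formula for $1/\varphi(q)$.

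Since every step is a direct substitution or an elementary factorization, I do not anticipate any genuine obstacle; the only point requiring care is the bookkeeping of the substitution $q\mapsto q^3$ in the second identity of Lemma~\ref{identity1}, which must be applied to the argument of the $B$-function consistently so that $B(-q)$ becomes $B(-q^3)$ and the exponents of $q$ transform correctly.
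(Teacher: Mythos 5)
Your proof is correct. Note that the paper itself does not prove this lemma at all: it is simply quoted from Chen and Lin \cite[Lemma 2.1]{Chen}, so there is no in-paper argument to compare against. What you supply is a short self-contained derivation from Lemma \ref{identity1}, and every step checks out: with $A=\varphi(q^9)$ and $C=B(-q^3)$ the factorization $(A+2qC)(A^2-2qAC+4q^2C^2)=A^3+8q^3C^3$ is exactly the sum-of-cubes identity, and substituting $q\mapsto q^3$ in the second identity of Lemma \ref{identity1} correctly turns $\varphi(q^3)^3+8qB(-q)^3=\varphi(q)^4/\varphi(q^3)$ into $\varphi(q^9)^3+8q^3B(-q^3)^3=\varphi(q^3)^4/\varphi(q^9)$, which cancels the prefactor $\varphi(q^9)/\varphi(q^3)^4$ and yields $\varphi(q)\cdot(\text{RHS})=1$. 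This is in fact the standard way the identity is obtained in the literature (it is how the analogous computation is organized around equations (\ref{stone}) and (\ref{sttwo}) in Section 3 of this paper), so your argument both fills the citation gap and is consistent with how the paper uses the two lemmas together. The only implicit point worth acknowledging is that you divide by $\varphi(q)$, which is legitimate since $\varphi(q)$ is a unit in the ring of formal power series (its constant term is $1$).
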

The following lemma will also be used frequently.
\begin{lem}\label{basic}
Let $p$ be a prime. We have
\begin{displaymath}
(q;q)_{\infty }^{p}  \equiv ({{q}^{p}};{{q}^{p}})_{\infty }  \pmod  {p}, \quad \varphi (q)^{p}  \equiv \varphi (q^{p}) \pmod{p}.
\end{displaymath}
\end{lem}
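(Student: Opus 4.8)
The plan is to derive both congruences from the elementary ``freshman's dream'' in characteristic $p$. The sole arithmetic inputs are the divisibility $\binom{p}{k}\equiv 0\pmod p$ for $1\le k\le p-1$, which gives the binomial congruence $(x+y)^{p}\equiv x^{p}+y^{p}\pmod p$, together with Fermat's little theorem $a^{p}\equiv a\pmod p$ for $a\in\mathbb{Z}$. By induction on the number of summands, the binomial congruence extends to any finite sum, and hence for any formal power series $f\in\mathbb{Z}[[q]]$ one obtains, coefficient by coefficient, $f(q)^{p}\equiv f(q^{p})\pmod p$. This single general fact underlies both statements.

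For the first identity I would work directly with the product. Writing $(q;q)_{\infty}=\prod_{n\ge 1}(1-q^{n})$ and raising to the $p$-th power gives
\[(q;q)_{\infty}^{p}=\prod_{n\ge 1}(1-q^{n})^{p}.\]
For each fixed $n$ the two-term case of the binomial congruence yields $(1-q^{n})^{p}\equiv 1-q^{np}\pmod p$, so taking the product over $n$ produces
\[(q;q)_{\infty}^{p}\equiv\prod_{n\ge 1}(1-q^{np})=(q^{p};q^{p})_{\infty}\pmod p,\]
which is exactly the claimed congruence.

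For the second identity I would apply the general power-series congruence to $\varphi(q)=\sum_{n=-\infty}^{\infty}q^{n^{2}}$, whose coefficients are integers. Expanding $\varphi(q)^{p}$ by the multinomial theorem, every mixed term carries a multinomial coefficient divisible by $p$, so that only the ``pure'' terms $q^{pn^{2}}$ survive; Fermat's little theorem then collapses each surviving coefficient back to its original value, and the net effect is $\varphi(q)^{p}\equiv\varphi(q^{p})\pmod p$.

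I do not expect a substantive obstacle here, since the result is a standard manifestation of the Frobenius endomorphism in characteristic $p$. The only point requiring a moment's care is that all of these congruences are to be read coefficientwise in $\mathbb{Z}[[q]]$, so that reduction modulo $p$ is compatible with both the formal infinite product in the first part and the infinite sum in the second. Because each coefficient of the series involved is a finite integer, this compatibility is automatic, and the manipulations above are legitimate at the level of formal power series.
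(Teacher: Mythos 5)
Your proof is correct. For the first congruence you argue exactly as the paper does: apply $(1-q^{n})^{p}\equiv 1-q^{np}\pmod p$ factor by factor in the product $(q;q)_{\infty}^{p}=\prod_{n\ge 1}(1-q^{n})^{p}$, noting that each coefficient only sees finitely many factors. For the second congruence your route differs from the paper's. You work with the series representation $\varphi(q)=\sum_{n=-\infty}^{\infty}q^{n^{2}}$ and invoke the general Frobenius fact $f(q)^{p}\equiv f(q^{p})\pmod p$ for $f\in\mathbb{Z}[[q]]$, via the multinomial theorem and Fermat's little theorem. The paper instead writes $\varphi(q)=(-q;-q)_{\infty}^{2}/(q^{2};q^{2})_{\infty}$ and deduces the second congruence from the first by substituting $q\mapsto -q$ and $q\mapsto q^{2}$ in the product congruence (and inverting the denominator, which is a unit in $(\mathbb{Z}/p\mathbb{Z})[[q]]$ since its constant term is $1$). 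Your version is arguably cleaner and more general --- it proves $f(q)^{p}\equiv f(q^{p})$ for any integral power series at once, and it sidesteps the small case distinction hidden in the paper's substitution $q\mapsto -q$ (namely that $(-q)^{p}=-q^{p}$ requires $p$ odd, with $p=2$ handled trivially). What the paper's approach buys is economy: having proved the product congruence, the theta-function congruence is a one-line corollary needing no further expansion. Both arguments are sound, and your attention to the coefficientwise interpretation of congruences in $\mathbb{Z}[[q]]$ is exactly the right point to flag.
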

\begin{proof}
By the binomial theorem, we have $(1-q)^{p} \equiv 1-q^{p}$ (mod $p$). This proves the first congruence.  Since $\varphi (q)=(-q;-q)_{\infty }^{2}/{{({{q}^{2}};{{q}^{2}})}_{\infty }}$, the second congruence relation follows from the first one.
\end{proof}
For convenience, we will let  $s=\varphi ({{q}^{9}})$ and $t=2B(-{{q}^{3}})$ throughout this section. From Lemmas \ref{identity1} and \ref{identity2}, we have
\begin{equation}\label{stone}
{{s}^{3}}+{{q}^{3}}{{t}^{3}}=\frac{\varphi {{({{q}^{3}})}^{4}}}{\varphi ({{q}^{9}})},
\end{equation}
\begin{equation}\label{sttwo}
\frac{1}{\varphi (q)}=\frac{\varphi ({{q}^{9}})}{\varphi {{({{q}^{3}})}^{4}}}({{s}^{2}}-qst+{{q}^{2}}{{t}^{2}}).
\end{equation}

We are going to establish an infinite family of congruences modulo $9\cdot 2^4$.
\begin{thm}\label{p3thm1}
We have
\begin{displaymath}
\begin{split}
\sum\limits_{n\ge 0}{\overline{{{p}}}_{3}(3n){{(-q)}^{n}}}&\equiv \frac{\varphi ({{q}^{3}})}{\varphi {{(q)}^{4}}} \pmod{9\cdot 2^3},\\
\sum\limits_{n\ge 0}{\overline{{{p}}}_{3}(3n+1){{(-q)}^{n}}}&\equiv 6\frac{\varphi {{({{q}^{3}})}^{4}}}{\varphi {{(q)}^{8}}}B(-q) \pmod{9\cdot 2^4},\\
\sum\limits_{n\ge 0}{\overline{{{p}}}_{3}(3n+2){{(-q)}^{n}}}&\equiv 24\frac{\varphi {{({{q}^{3}})}^{3}}}{\varphi {{(q)}^{8}}}B{{(-q)}^{2}} \pmod{9\cdot 2^5}.
\end{split}
\end{displaymath}
\end{thm}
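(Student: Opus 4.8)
The plan is to 3-dissect the series $\sum_{n\ge 0}\overline{p}_3(n)(-q)^n = 1/\varphi(q)^3$, which follows from the generating function by sending $q\mapsto -q$. Cubing the relation (\ref{sttwo}) gives
\[
\frac{1}{\varphi(q)^3}=\frac{s^3}{\varphi(q^3)^{12}}\big(s^2-qst+q^2t^2\big)^3,
\]
and since $s=\varphi(q^9)$ and $t=2B(-q^3)$ are both power series in $q^3$, the residue modulo $3$ of the exponent of $q$ in any term of $(s^2-qst+q^2t^2)^3$ is governed solely by the explicit powers of $q$. This lets me separate the three progressions $3n$, $3n+1$, $3n+2$ with no interference from $s$ and $t$.

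First I would expand $(s^2-qst+q^2t^2)^3$ by the multinomial theorem and sort the ten terms by the residue of their $q$-exponent. The residue-$0$ part comes out as $s^6-7q^3s^3t^3+q^6t^6$, the residue-$1$ part as $-3qs^5t+6q^4s^2t^4$, and the residue-$2$ part as $6q^2s^4t^2-3q^5st^5$. The decisive move is to feed each of these through (\ref{stone}); writing $P=s^3+q^3t^3=\varphi(q^3)^4/\varphi(q^9)$, a short check gives
\[
s^6-7q^3s^3t^3+q^6t^6=P^2-9q^3s^3t^3,
\]
\[
-3qs^5t+6q^4s^2t^4=-3qs^2t\,P+9q^4s^2t^4,
\]
\[
6q^2s^4t^2-3q^5st^5=6q^2st^2\,P-9q^5st^5.
\]
The point is that the surviving remainder in each identity carries an explicit factor of $9$.

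Next I would extract each progression by keeping its residue class, dividing by the appropriate power $q^0,q^1,q^2$ respectively, tracking the sign produced by $(-q)^{3n+r}$ (only the $3n+1$ case flips sign), and replacing $q^3$ by $q$. Under this substitution $s\mapsto\varphi(q^3)$, $t\mapsto 2B(-q)$, $\varphi(q^3)\mapsto\varphi(q)$, and $P\mapsto\varphi(q)^4/\varphi(q^3)$, so the $P$-term in each class collapses to exactly the claimed right-hand side, while the remainder becomes $9$ times a power of $t$. Because $t^3=8B(-q)^3$, $t^4=16B(-q)^4$ and $t^5=32B(-q)^5$ after the substitution, the three remainders pick up coefficients $72$, $144$ and $288$ --- precisely the moduli $9\cdot 2^3$, $9\cdot 2^4$ and $9\cdot 2^5$ under which they vanish, leaving the three congruences.

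The main obstacle is entirely the bookkeeping of the second step: collecting the multinomial terms correctly and then recognizing the symmetric combinations as the right multiples of $P$ and $P^2$, so that the factor of $9$ is exposed. Once this factorization is found, the dissection is routine, and the fact that the power of $2$ in each modulus matches the power of $t$ in the surviving remainder is automatic.
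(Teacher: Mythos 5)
Your proposal is correct and follows essentially the same route as the paper: cube the dissection (\ref{sttwo}), sort the trinomial expansion by residue of the $q$-exponent, rewrite each piece via $s^3+q^3t^3=\varphi(q^3)^4/\varphi(q^9)$ so that the leftover terms carry an explicit factor of $9$, and let $t=2B(-q^3)$ supply the powers of $2$ in the moduli. Your explicit remainders $-9q^3s^3t^3$, $9q^4s^2t^4$, $-9q^5st^5$ and the sign bookkeeping all match the paper's computation.
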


 \begin{proof}
By (\ref{sttwo}) we have
\begin{equation}\label{pterm}
\sum\limits_{n\ge 0}{{{\overline{p}}_{3}}(n){{(-q)}^{n}}}=\frac{1}{\varphi {{(q)}^{3}}}=\frac{\varphi {{({{q}^{9}})}^{3}}}{\varphi {{({{q}^{3}})}^{12}}}{{({{s}^{2}}-qst+{{q}^{2}}{{t}^{2}})}^{3}}.
\end{equation}
Note that
\begin{displaymath}
{{({{s}^{2}}-qst+{{q}^{2}}{{t}^{2}})}^{3}}=({{s}^{6}}-7{{q}^{3}}{{s}^{3}}{{t}^{3}}+{{q}^{6}}{{t}^{6}})+q(-3{{s}^{5}}t+6{{q}^{3}}{{s}^{2}}{{t}^{4}})+{{q}^{2}}(6{{s}^{4}}{{t}^{2}}-3{{q}^{3}}s{{t}^{5}}).
\end{displaymath}
Since $t=2B(-q^3)$, collecting all the terms of the form $q^{3n}$ on both sides of (\ref{pterm}) and applying (\ref{stone}),  we obtain
\begin{displaymath}
\begin{split}
   \sum\limits_{n\ge 0}{{{\overline{p}}_{3}}(3n){{(-q)}^{3n}}}&=\frac{\varphi {{({{q}^{9}})}^{3}}}{\varphi {{({{q}^{3}})}^{12}}}({{s}^{6}}-7{{q}^{3}}{{s}^{3}}{{t}^{3}}+{{q}^{6}}{{t}^{6}}) \\
 & \equiv \frac{\varphi {{({{q}^{9}})}^{3}}}{\varphi {{({{q}^{3}})}^{12}}}{{({{s}^{3}}+{{q}^{3}}{{t}^{3}})}^{2}} \\
 & \equiv \frac{\varphi ({{q}^{9}})}{\varphi {{({{q}^{3}})}^{4}}} \pmod{9\cdot 2^3}.
\end{split}
\end{displaymath}
Replacing ${{q}^{3}}$ by $q$, we get the first congruence identity.

Similarly, we have
\begin{displaymath}
\begin{split}
   \sum\limits_{n\ge 0}{{{\overline{p}}_{3}}(3n+1){{(-q)}^{3n+1}}}&=q\frac{\varphi {{({{q}^{9}})}^{3}}}{\varphi {{({{q}^{3}})}^{12}}}(-3{{s}^{5}}t+6{{q}^{3}}{{s}^{2}}{{t}^{4}}) \\
 & \equiv -3q\frac{\varphi {{({{q}^{9}})}^{3}}}{\varphi {{({{q}^{3}})}^{12}}}{{s}^{2}}t({{s}^{3}}+{{q}^{3}}{{t}^{3}}) \\
 & \equiv -6q\frac{\varphi {{({{q}^{9}})}^{4}}}{\varphi {{({{q}^{3}})}^{8}}}B(-{{q}^{3}}) \pmod{9 \cdot 2^4}.
\end{split}
\end{displaymath}
Dividing both sides by $-q$ and replacing ${{q}^{3}}$ by $q$, we get the second congruence identity.

In the same way, we have
\begin{displaymath}
\begin{split}
   \sum\limits_{n\ge 0}{{{\overline{p}}_{3}}(3n+2){{(-q)}^{3n+2}}}  & ={{q}^{2}}\frac{\varphi {{({{q}^{9}})}^{3}}}{\varphi {{({{q}^{3}})}^{12}}}(6{{s}^{4}}{{t}^{2}}-3s{{t}^{5}}{{q}^{3}}) \\
 & \equiv 6{{q}^{2}}\frac{\varphi {{({{q}^{9}})}^{3}}}{\varphi {{({{q}^{3}})}^{12}}}s{{t}^{2}}({{s}^{3}}+{{q}^{3}}{{t}^{3}}) \\
 & \equiv 24{{q}^{2}}\frac{\varphi {{({{q}^{9}})}^{3}}}{\varphi {{({{q}^{3}})}^{8}}}B{{(-{{q}^{3}})}^{2}} \pmod{9\cdot 2^5}.
\end{split}
\end{displaymath}
Dividing both sides by ${{q}^{2}}$ and replacing ${{q}^{3}}$ by $q$, we get the third congruence identity.
\end{proof}
The following corollary follows immediately.
\begin{cor}
For any nonnegative integer $n$, we have
\[{{\overline{p}}_{3}}(3n+1)\equiv 0 \pmod{6}, \quad {{\overline{p}}_{3}}(3n+2)\equiv 0 \pmod{24}.\]
\end{cor}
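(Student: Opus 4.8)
The final statement to prove is the corollary asserting
\[
\overline{p}_3(3n+1)\equiv 0 \pmod 6, \qquad \overline{p}_3(3n+2)\equiv 0 \pmod{24}
\]
for every nonnegative integer $n$. The plan is to read these two congruences off directly from the three generating-function congruences established in Theorem \ref{p3thm1}, so no new computation with theta identities is required. The key observation is that Theorem \ref{p3thm1} already isolates the arithmetic progressions $3n+1$ and $3n+2$ and expresses their generating functions, taken with the sign-twist $(-q)^n$, as an explicit integer multiple of a product of theta-type series with integer coefficients.

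First I would recall the second congruence identity of Theorem \ref{p3thm1}, namely
\[
\sum_{n\ge 0}\overline{p}_3(3n+1)(-q)^n \equiv 6\,\frac{\varphi(q^3)^4}{\varphi(q)^8}\,B(-q) \pmod{9\cdot 2^4}.
\]
The right-hand side is $6$ times a power series whose coefficients are rational integers, since $\varphi(q^3)^4/\varphi(q)^8$ and $B(-q)$ both have integer $q$-expansions (the denominator $\varphi(q)^{-8}=(-q;q)_\infty^{-8}(q;q)_\infty^{8}$ lies in $\mathbb{Z}[[q]]$). Hence every coefficient of the right side is divisible by $6$, and in particular is $\equiv 0\pmod 6$. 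Comparing the coefficient of $q^n$ on both sides and absorbing the harmless sign $(-1)^n$, we conclude $\overline{p}_3(3n+1)\equiv 0\pmod 6$. The modulus $9\cdot 2^4=144$ on the congruence is more than enough to detect divisibility by $6$, so the reduction is immediate.

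Next I would apply the same reasoning to the third congruence identity,
\[
\sum_{n\ge 0}\overline{p}_3(3n+2)(-q)^n \equiv 24\,\frac{\varphi(q^3)^3}{\varphi(q)^8}\,B(-q)^2 \pmod{9\cdot 2^5}.
\]
Again the series $\varphi(q^3)^3\varphi(q)^{-8}B(-q)^2$ has integer coefficients, so the right-hand side is $24$ times an integral power series, whence each coefficient is divisible by $24$. Matching coefficients of $q^n$ and discarding the sign factor gives $\overline{p}_3(3n+2)\equiv 0\pmod{24}$, and since the stated modulus $9\cdot 2^5=288$ is a multiple of $24$, the congruence survives the reduction. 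Thus both claims follow at once.

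There is essentially no obstacle here: the corollary is a formal consequence of the explicit factors $6$ and $24$ appearing in Theorem \ref{p3thm1}, provided one checks the one genuinely substantive point, namely that the accompanying infinite products expand into power series with integer coefficients so that multiplication by $6$ and by $24$ really forces the divisibility. The only mild subtlety I would flag is the sign twist $(-q)^n$: since $\sum \overline{p}_3(3n+r)(-q)^n = \sum (-1)^n\overline{p}_3(3n+r)q^n$, the congruence on the twisted coefficient $(-1)^n\overline{p}_3(3n+r)$ transfers to $\overline{p}_3(3n+r)$ itself because $\pm 1$ is a unit modulo $6$ and modulo $24$. Beyond recording these observations, the proof is a two-line deduction.
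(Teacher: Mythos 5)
Your proposal is correct and is exactly the deduction the paper intends: it states the corollary "follows immediately" from Theorem \ref{p3thm1}, and your argument simply fills in the routine details (integrality of the accompanying series, the fact that $6\mid 144$ and $24\mid 288$, and the harmless sign twist). No discrepancy with the paper's approach.
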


\begin{thm}\label{p3thm2}
For any integer $\alpha \ge 1$, we have
\begin{displaymath}
\begin{split}
\sum\limits_{n\ge 0}{{{\overline{p}}_{3}}({{3}^{2\alpha }}n){{(-q)}^{n}}}  &\equiv \frac{\varphi {{({{q}^{3}})}^{4}}}{\varphi {{(q)}^{7}}} \pmod{9\cdot 2^3}, \\
\sum\limits_{n\ge 0}{{{\overline{p}}_{3}}({{3}^{2\alpha +1}}n){{(-q)}^{n}}} &\equiv \frac{\varphi {{({{q}^{3}})}^{5}}}{\varphi {{(q)}^{8}}} \pmod{9\cdot 2^3}.
\end{split}
\end{displaymath}
\end{thm}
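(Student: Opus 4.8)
The plan is to prove both congruences together by induction on $\alpha$, using the observation that the two right-hand sides form a pair that reproduces itself under the dissection operator $E$ defined by ``keep the terms whose $q$-exponent is divisible by $3$, then replace $q^3$ by $q$''. Because $\overline{p}_3(3^{2\alpha+1}n)=\overline{p}_3(3\cdot 3^{2\alpha}n)$ and $\overline{p}_3(3^{2\alpha+2}n)=\overline{p}_3(3\cdot 3^{2\alpha+1}n)$, and because $(-1)^{3m}=(-1)^m$ makes $E$ send $\sum_n a_n(-q)^n$ to $\sum_m a_{3m}(-q)^m$ with no stray signs, applying $E$ to the generating function of $\overline{p}_3(3^{2\alpha}n)$ yields that of $\overline{p}_3(3^{2\alpha+1}n)$, and applying it again yields that of $\overline{p}_3(3^{2\alpha+2}n)$. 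Thus it suffices to verify, modulo $9\cdot 2^3$, the base case $E\!\left(\varphi(q^3)/\varphi(q)^4\right)\equiv \varphi(q^3)^4/\varphi(q)^7$ (whose input is supplied by the first congruence of Theorem \ref{p3thm1}) together with the two reproduction identities $E\!\left(\varphi(q^3)^4/\varphi(q)^7\right)\equiv \varphi(q^3)^5/\varphi(q)^8$ and $E\!\left(\varphi(q^3)^5/\varphi(q)^8\right)\equiv \varphi(q^3)^4/\varphi(q)^7$.

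Since $9\cdot 2^3=8\cdot 9$ with $\gcd(8,9)=1$, I would prove each of these three identities separately modulo $8$ and modulo $9$ and then recombine by the Chinese Remainder Theorem. The modulo $8$ part is immediate: squaring the identity $\varphi(q)^2=\varphi(q^2)^2+4q\psi(q^4)^2$ of Lemma \ref{keyid1} gives $\varphi(q)^4\equiv\varphi(q^2)^4\pmod 8$, and iterating gives $\varphi(q)^4\equiv\varphi(q^{2^k})^4\pmod 8$ for every $k$, whence $\varphi(q)^4\equiv 1\pmod 8$. Hence modulo $8$ every form collapses, e.g.\ $\varphi(q^3)^4/\varphi(q)^7\equiv\varphi(q)$ and $\varphi(q^3)^5/\varphi(q)^8\equiv\varphi(q^3)$; since $E\bigl(\varphi(q)\bigr)=\varphi(q^3)$ and $E\bigl(\varphi(q^3)\bigr)=\varphi(q)$ (the exponents of $\varphi(q)$ are squares, those of $\varphi(q^3)$ are all divisible by $3$), all three identities hold modulo $8$ at once.

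The modulo $9$ part is the heart of the argument. Cubing the relation $\varphi(q)^3\equiv\varphi(q^3)\pmod 3$ of Lemma \ref{basic} gives $\varphi(q)^9\equiv\varphi(q^3)^3\pmod 9$, so for $b\le 9$ one has $\varphi(q^3)^a/\varphi(q)^b\equiv\varphi(q^3)^{a}\varphi(q)^{9-b}/\varphi(q^3)^3\pmod 9$. Writing $\varphi(q)=s+qt$ as in Lemma \ref{identity1}, with $s=\varphi(q^9)$ and $t=2B(-q^3)$ both functions of $q^3$, the operator $E$ acts only on the factor $\varphi(q)^{9-b}=(s+qt)^{9-b}$, retaining exactly the terms $\binom{9-b}{k}s^{9-b-k}t^kq^k$ with $3\mid k$. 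For the two reproduction steps $9-b\in\{1,2\}$, so only the $k=0$ term $s^{9-b}$ survives and the identities drop out immediately after $q^3\to q$. For the base case $9-b=5$, the $k=3$ term $10\,s^2q^3t^3$ also appears; here I would reduce its coefficient via $10\equiv 1\pmod 9$ and invoke (\ref{stone}) in the form $s^3+q^3t^3=\varphi(q^3)^4/s$ to collapse $s^5+s^2q^3t^3=s^2(s^3+q^3t^3)=s\,\varphi(q^3)^4$, which, after dividing by $\varphi(q^3)^2$ and setting $q^3\to q$, gives $E\!\left(\varphi(q^3)/\varphi(q)^4\right)\equiv\varphi(q^3)\varphi(q)^2\equiv\varphi(q^3)^4/\varphi(q)^7\pmod 9$, as required.

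The main obstacle --- though a mild one once the splitting is in place --- is the bookkeeping around the single off-diagonal term in the base case. That term carries the factor $t^3=8B(-q^3)^3$, which is divisible by $8$ and so vanishes modulo $8$, while modulo $9$ it survives with coefficient $10\equiv 1$ and must be absorbed by (\ref{stone}); the fact that the $2$-adic and $3$-adic reductions treat this same term oppositely is exactly why I would work modulo $8$ and modulo $9$ separately rather than grind directly modulo $72$. A secondary point to check is that inverting $\varphi(q)^9$ modulo $9$ and $\varphi(q)^4$ modulo $8$ is legitimate: both are units with constant term $1$, so $(1+9W)^{-1}\equiv 1\pmod 9$ and $(1+8W)^{-1}\equiv 1\pmod 8$ as integral power series, which is what makes the displayed reductions valid.
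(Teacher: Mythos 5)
Your proof is correct, and while it shares the paper's overall skeleton --- induction on $\alpha$ driven by the $3$-dissection $\varphi(q)=\varphi(q^9)+2qB(-q^3)$ of Lemma \ref{identity1} and the identity (\ref{stone}), with the pair $\varphi(q^3)^4/\varphi(q)^7$, $\varphi(q^3)^5/\varphi(q)^8$ reproducing each other under your operator $E$ --- the way you verify the three dissection congruences is genuinely different from, and leaner than, the paper's. The paper substitutes (\ref{sttwo}) for each factor of $1/\varphi(q)$, expands $(s^2-qst+q^2t^2)^N$ for $N=4,7,8$, and checks term by term that the multinomial coefficients reduce correctly modulo $9\cdot 2^3$ (e.g.\ $-77\equiv 4$ and $357\equiv 6$ modulo $9$, with the powers of $t=2B(-q^3)$ supplying the needed powers of $2$). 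Your CRT splitting replaces all of that: modulo $8$ everything collapses through $\varphi(q)^4\equiv 1$, and modulo $9$ the relation $\varphi(q)^9\equiv\varphi(q^3)^3$ converts the negative powers of $\varphi(q)$ into the small positive powers $\varphi(q)^{9-b}$ with $9-b\in\{1,2,5\}$, so the only nontrivial expansion is $(s+qt)^5$, whose single off-diagonal term $10q^3s^2t^3$ is absorbed by (\ref{stone}) exactly as you describe. This buys a much shorter computation and makes transparent why $9\cdot 2^3$ is the natural modulus; the price is the two auxiliary facts $\varphi(q)^4\equiv 1\pmod 8$ and $\varphi(q)^9\equiv\varphi(q^3)^3\pmod 9$, both of which you justify correctly (the first by iterating the identity $\varphi(q)^2=\varphi(q^2)^2+4q\psi(q^4)^2$ of Lemma \ref{keyid1}, the second by cubing Lemma \ref{basic}), together with the observation --- which you also make --- that inverting a unit power series with constant term $1$ preserves these congruences. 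One small point worth writing out in a final version: $E$ commutes with coefficientwise reduction modulo any integer, which is what licenses applying $E$ to a congruence rather than to an exact identity at each inductive step.
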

\begin{proof}
We proceed our proof by induction on $\alpha $.

By (\ref{sttwo}) and the first identity in Theorem \ref{p3thm1}, we have
\[\sum\limits_{n\ge 0}{{{\overline{p}}_{3}}(3n){{(-q)}^{n}}} \equiv \frac{\varphi ({{q}^{3}})}{\varphi ({{q}})^{4}} = \frac{\varphi {{({{q}^{9}})}^{4}}}{\varphi {{({{q}^{3}})}^{15}}}{{({{s}^{2}}-qst+{{q}^{2}}{{t}^{2}})}^{4}} \pmod{9\cdot 2^3}.\]
Collecting all the terms of the form ${{q}^{3n}}$ in the expansion of ${{({{s}^{2}}-qst+{{q}^{2}}{{t}^{2}})}^{4}}$ and reducing modulo 9, we get
\[{{s}^{8}}-16{{q}^{3}}{{s}^{5}}{{t}^{3}}+10{{q}^{6}}{{s}^{2}}{{t}^{6}}\equiv {{s}^{2}}{{({{s}^{3}}+{{q}^{3}}{{t}^{3}})}^{2}} \pmod{9\cdot 2^3}.\]
Hence we have
\[\sum\limits_{n\ge 0}{{{\overline{p}}_{3}}(9n){{(-q)}^{3n}}}\equiv \frac{\varphi {{({{q}^{9}})}^{4}}}{\varphi {{({{q}^{3}})}^{15}}}{{s}^{2}}{{({{s}^{3}}+{{q}^{3}}{{t}^{3}})}^{2}}=\frac{\varphi {{({{q}^{9}})}^{4}}}{\varphi {{({{q}^{3}})}^{7}}} \pmod{9\cdot 2^3}.\]
Replacing ${{q}^{3}}$ by $q$ we proved the first identity for $\alpha =1$.

 Suppose
\begin{equation}\label{a66}
\sum\limits_{n\ge 0}{{{\overline{p}}_{3}}({{3}^{2\alpha }}n){{(-q)}^{n}}}\equiv \frac{\varphi {{({{q}^{3}})}^{4}}}{\varphi {{(q)}^{7}}} \pmod{9\cdot 2^3}.
\end{equation}
Then by (\ref{sttwo}) we have
\[\frac{\varphi {{({{q}^{3}})}^{4}}}{\varphi {{(q)}^{7}}}=\frac{\varphi {{({{q}^{9}})}^{7}}}{\varphi {{({{q}^{3}})}^{24}}}{{({{s}^{2}}-qst+{{q}^{2}}{{t}^{2}})}^{7}}.\]
Collecting those terms of the form $q^{3n}$ in the expansion of ${{({{s}^{2}}-qst+{{q}^{2}}{{t}^{2}})}^{7}}$ and reducing modulo 9, we get
\begin{displaymath}
\begin{split}
  & {{s}^{14}}-77{{q}^{3}}{{s}^{11}}{{t}^{3}}+357{{q}^{6}}{{s}^{8}}{{t}^{6}}-266{{q}^{9}}{{s}^{5}}{{t}^{9}}+28{{q}^{12}}{{s}^{2}}{{t}^{12}} \\
 & \equiv {{s}^{14}}+4{{q}^{3}}{{s}^{11}}{{t}^{3}}+6{{q}^{6}}{{s}^{8}}{{t}^{6}}+4{{q}^{9}}{{s}^{5}}{{t}^{9}}+{{q}^{12}}{{s}^{2}}{{t}^{12}} \\
 & ={{s}^{2}}{{({{s}^{3}}+{{q}^{3}}{{t}^{3}})}^{4}} \pmod{9\cdot 2^3}.
\end{split}
\end{displaymath}
If we collect all the terms of the form ${{q}^{3n}}$ in (\ref{a66}), we get
\[\sum\limits_{n\ge 0}{{{\overline{p}}_{3}}({{3}^{2\alpha +1}}n){{(-q)}^{3n}}}\equiv \frac{\varphi {{({{q}^{9}})}^{7}}}{\varphi {{({{q}^{3}})}^{24}}}{{s}^{2}}{{({{s}^{3}}+{{q}^{3}}{{t}^{3}})}^{4}}\equiv \frac{\varphi {{({{q}^{9}})}^{5}}}{\varphi {{({{q}^{3}})}^{8}}} \pmod{9\cdot 2^3}.\]
Replacing ${{q}^{3}}$ by $q$, we obtain
\begin{equation}\label{induction}
\sum\limits_{n\ge 0}{{{\overline{p}}_{3}}({{3}^{2\alpha +1}}n){{(-q)}^{n}}}\equiv \frac{\varphi {{({{q}^{3}})}^{5}}}{\varphi {{(q)}^{8}}} \pmod{9\cdot 2^3}.
\end{equation}

Now suppose we have (\ref{induction}),  by (\ref{sttwo}) we deduce that
\[\frac{\varphi {{({{q}^{3}})}^{5}}}{\varphi {{(q)}^{8}}}=\frac{\varphi {{({{q}^{9}})}^{8}}}{\varphi {{({{q}^{3}})}^{27}}}{{({{s}^{2}}-qst+{{q}^{2}}{{t}^{2}})}^{8}}.\]
Collecting all the terms of the form ${{q}^{3n}}$ in the expansion of ${{({{s}^{2}}-qst+{{q}^{2}}{{t}^{2}})}^{8}}$ and reducing modulo 9, we get
\begin{displaymath}
\begin{split}
  & {{s}^{16}}-112{{q}^{3}}{{s}^{13}}{{t}^{3}}+784{{q}^{6}}{{s}^{10}}{{t}^{6}}-1016{{q}^{9}}{{s}^{7}}{{t}^{9}}+266{{q}^{12}}{{s}^{4}}{{t}^{12}}-8{{q}^{15}}s{{t}^{15}} \\
 & \equiv s({{s}^{15}}+5{{q}^{3}}{{s}^{12}}{{t}^{3}}+10{{q}^{6}}{{s}^{9}}{{t}^{6}}+10{{q}^{9}}{{s}^{6}}{{t}^{9}}+5{{q}^{12}}{{s}^{3}}{{t}^{12}}+{{q}^{15}}{{t}^{15}}) \\
 & =s{{({{s}^{3}}+{{q}^{3}}{{t}^{3}})}^{5}} \pmod{9\cdot 2^3}.
\end{split}
\end{displaymath}
If we collect all the terms of the form ${{q}^{3n}}$ in (\ref{induction}), we get
\[\sum\limits_{n\ge 0}{{{\overline{p}}_{3}}({{3}^{2\alpha +2}}n){{(-q)}^{3n}}}\equiv \frac{\varphi {{({{q}^{9}})}^{8}}}{\varphi {{({{q}^{3}})}^{27}}}s{{({{s}^{3}}+{{q}^{3}}{{t}^{3}})}^{5}}\equiv \frac{\varphi {{({{q}^{9}})}^{4}}}{\varphi {{({{q}^{3}})}^{7}}} \pmod{9\cdot 2^3}.\]
Replacing ${{q}^{3}}$ by $q$, we obtain
\[\sum\limits_{n\ge 0}{{{\overline{p}}_{3}}({{3}^{2\alpha +2}}n){{(-q)}^{n}}}\equiv \frac{\varphi {{({{q}^{3}})}^{4}}}{\varphi {{(q)}^{7}}} \pmod{9\cdot 2^3}.\]
By inducting on $\alpha $, we complete our proof.
\end{proof}

\begin{thm}\label{p3mod9}
For any integers $\alpha \ge 1$ and $n\ge 0$, we have
\[{{\overline{p}}_{3}}\big({{3}^{2\alpha +1}}(3n+2)\big)\equiv 0 \pmod{144}.\]
\end{thm}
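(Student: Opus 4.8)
The plan is to prove the two coprime factors of $144 = 2^4\cdot 3^2$ separately and then recombine them by the Chinese Remainder Theorem. Since $\gcd(16,9)=1$, it suffices to show that $\overline{p}_3\big(3^{2\alpha+1}(3n+2)\big)$ is divisible by $16$ and by $9$; the factorization $9\cdot 2^4$ in the abstract already signals this split, with the $2^4$ coming from Section~2 and the $9$ from the present section.

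For the factor $16$ I would not use anything from this section at all, but invoke the modulo-$16$ determination \eqref{p3mod16} (the case $k=3$ of Theorem \ref{mod16}). Write $N=3^{2\alpha+1}(3n+2)$. Because $3\nmid(3n+2)$, the $3$-adic valuation of $N$ is exactly $2\alpha+1$, which is odd, so $N$ is not a perfect square; and twice a square has even $3$-adic valuation, so $N$ is not twice a square either. Hence $N$ falls into the ``else'' case of \eqref{p3mod16}, giving $\overline{p}_3(N)\equiv 0\pmod{16}$ with essentially no computation.

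For the factor $9$ I would start from the second congruence of Theorem \ref{p3thm2}, reduced modulo $9$,
\[
\sum_{n\ge 0}\overline{p}_3\big(3^{2\alpha+1}n\big)(-q)^n \equiv \frac{\varphi(q^3)^5}{\varphi(q)^8} \pmod 9,
\]
and then $3$-dissect to isolate the subprogression $3n+2$. Using \eqref{sttwo} I would write $\frac{\varphi(q^3)^5}{\varphi(q)^8}=\frac{\varphi(q^9)^8}{\varphi(q^3)^{27}}(s^2-qst+q^2t^2)^8$, where $s=\varphi(q^9)$ and $t=2B(-q^3)$ are functions of $q^3$, so that the residue of the $q$-exponent modulo $3$ is governed entirely by the explicit powers of $q$ in the multinomial expansion. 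Collecting the terms whose $q$-exponent is $\equiv 2\pmod 3$ yields
\[
36q^2s^{14}t^2 - 504q^5s^{11}t^5 + 1107q^8s^8t^8 - 504q^{11}s^5t^{11} + 36q^{14}s^2t^{14},
\]
and the three distinct coefficients $36,504,1107$ are all divisible by $9$. Thus the $(3n+2)$-part of $\frac{\varphi(q^3)^5}{\varphi(q)^8}$ is $\equiv 0\pmod 9$, and after dividing by $q^2$ and replacing $q^3$ by $q$ I obtain $\overline{p}_3\big(3^{2\alpha+1}(3n+2)\big)\equiv 0\pmod 9$. Combining the two results by CRT gives the claim modulo $144$.

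The main obstacle—and the reason the CRT split is essential rather than cosmetic—is that the intermediate congruence in Theorem \ref{p3thm2} holds only modulo $9\cdot 2^3=72$ and genuinely cannot be sharpened to $144$: its error already contains a term of the form $-81\,q^3 s^{11}t^3 = -648\,q^3 s^{11}B(-q^3)^3$, whose coefficient has $2$-adic valuation exactly $3$. Hence one cannot read off divisibility by $16$ from the dissection of $\frac{\varphi(q^3)^5}{\varphi(q)^8}$, and the factor $2^4$ must be supplied independently by the Section~2 result; the section's machinery contributes only the factor $9$. The one genuine verification is therefore that the surviving multinomial coefficients in the $3n+2$ class are multiples of $9$, which can be checked at a glance directly, or conceptually from the decomposition $s^2-qst+q^2t^2=(s+qt)^2-3qst$, whose $8$th power retains only the two lowest powers of $3$ modulo $9$, after which the relevant binomial coefficients cancel pairwise.
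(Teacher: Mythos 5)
Your proposal is correct and follows essentially the same route as the paper: the factor $9$ comes from $3$-dissecting $\varphi(q^3)^5/\varphi(q)^8$ via \eqref{sttwo} and observing that the coefficients of the $q^{3n+2}$ terms in $(s^2-qst+q^2t^2)^8$ (which you compute correctly as $36,-504,1107,-504,36$) are divisible by $9$, while the factor $16$ comes from the ``else'' case of \eqref{p3mod16}. The only cosmetic difference is that you rule out $3^{2\alpha+1}(3n+2)$ being a square or twice a square by its odd $3$-adic valuation, whereas the paper argues with residues of $x^2$ and $2x^2$ modulo $9$; both are valid.
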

\begin{proof}
By (\ref{sttwo}) we have
\[\frac{\varphi {{({{q}^{3}})}^{5}}}{\varphi {{(q)}^{8}}}=\frac{\varphi {{({{q}^{9}})}^{8}}}{\varphi {{({{q}^{3}})}^{27}}}{{({{s}^{2}}-qst+{{q}^{2}}{{t}^{2}})}^{8}}.\]
If we collect all the terms of the form ${{q}^{3n+2}}$ in the expansion of ${{({{s}^{2}}-qst+{{q}^{2}}{{t}^{2}})}^{8}}$, we get \[9{{q}^{2}}(4{{s}^{14}}{{t}^{2}}-56{{q}^{3}}{{s}^{11}}{{t}^{5}}+123{{q}^{6}}{{s}^{8}}{{t}^{8}}-56{{q}^{9}}{{s}^{5}}{{t}^{11}}+4{{q}^{12}}{{s}^{2}}{{t}^{14}}).\]
It is obvious that all the coefficients are divisible by 9. Thus all the coefficients of the terms ${{q}^{3n+2}}$  ($n=0,1,2,\cdots $) in  both sides of (\ref{induction}) are divisible by 9, which implies ${{\overline{p}}_{3}}({{3}^{2\alpha +1}}(3n+2))\equiv 0$ (mod $9$).

For any integer $x$, we have ${{x}^{2}}\equiv 0, 1, 4, 7$ (mod $9$). Hence  ${{3}^{2\alpha +1}}(3n+2)={{9}^{\alpha }}(9n+6)$ cannot be a square. Moreover, since $2{{x}^{2}}\equiv 0, 2, 5, 8$ (mod $9$), we know $9n+6$ cannot be twice a square. Thus ${{9}^{\alpha }}(9n+6)$ cannot be twice a square. By (\ref{p3mod16}) we know ${{\overline{p}}_{3}}({{3}^{2\alpha +1}}(3n+2))\equiv 0$ (mod $16$).
Combining this with ${{\overline{p}}_{3}}({{3}^{2\alpha +1}}(3n+2))\equiv 0$ (mod $9$), we complete our proof.
\end{proof}

 \begin{cor}
${{\overline{p}}_{3}}(n)$ is divisible by $144$ for at least $\frac{1}{72}$ of the time.
\end{cor}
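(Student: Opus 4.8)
The modulus $144=9\cdot 16$ factors into coprime pieces, so the natural plan is to prove $\overline{p}_3\big(3^{2\alpha+1}(3n+2)\big)\equiv 0$ separately modulo $9$ and modulo $16$ and then combine the two by the Chinese Remainder Theorem. The divisibility by $9$ will fall out of the induction already carried out in Theorem \ref{p3thm2}, while the divisibility by $16$ is a direct consequence of the modulo $16$ determination (\ref{p3mod16}).

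For the $9$-part I would begin from (\ref{induction}) and feed its right-hand side back into the parametrization (\ref{sttwo}), raising the latter to the eighth power to write
\[
\frac{\varphi(q^3)^5}{\varphi(q)^8}=\frac{\varphi(q^9)^8}{\varphi(q^3)^{27}}\,(s^2-qst+q^2t^2)^8,
\]
in which the prefactor is a power series in $q^3$ alone. The crux is then a $3$-dissection: one isolates the terms of $(s^2-qst+q^2t^2)^8$ whose visible $q$-degree lies in the progression $3n+2$, and checks that every one of their coefficients is divisible by $9$; explicitly these terms assemble into $9q^2\big(4s^{14}t^2-56q^3s^{11}t^5+123q^6s^8t^8-56q^9s^5t^{11}+4q^{12}s^2t^{14}\big)$. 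Since multiplying by the $q^3$-homogeneous prefactor preserves the residue class mod $3$ of the exponents and keeps the factor of $9$ intact, the coefficient of $q^{3n+2}$ on the left of (\ref{induction})---namely $\overline{p}_3\big(3^{2\alpha+1}(3n+2)\big)$ up to sign---is divisible by $9$.

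For the $16$-part I would argue purely arithmetically using (\ref{p3mod16}). Put $N=3^{2\alpha+1}(3n+2)=9^\alpha(9n+6)$. Because $3n+2\equiv 2\pmod 3$, the prime $3$ does not divide $3n+2$, so $v_3(N)=2\alpha+1$ is odd; a perfect square has even $3$-adic valuation and a number of the form $2m^2$ likewise has even $v_3$ (as $v_3(2)=0$), so $N$ can be neither a square nor twice a square. Hence the last case of (\ref{p3mod16}) applies and $\overline{p}_3(N)\equiv 0\pmod{16}$. Equivalently, one may observe that $9n+6\equiv 6\pmod 9$, while squares and twice-squares are $\equiv 0,1,4,7$ and $\equiv 0,2,5,8\pmod 9$ respectively, so $6$ is excluded in both cases.

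Finally, since $\gcd(9,16)=1$, the two congruences $\overline{p}_3(N)\equiv 0\pmod 9$ and $\overline{p}_3(N)\equiv 0\pmod{16}$ combine to give $\overline{p}_3(N)\equiv 0\pmod{144}$, as claimed. I expect the only genuinely delicate point to be the coefficient computation in the $9$-part: the reduction $s^2-qst+q^2t^2\equiv(s+qt)^2\pmod 3$ makes divisibility by $3$ of the $3n+2$ coefficients transparent, but upgrading this to divisibility by the full $9$ requires the finer expansion displayed above, and one must track the $q^3$-homogeneity of the prefactor carefully so that the $3$-dissection does not leak contributions between residue classes.
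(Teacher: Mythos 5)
What you have written is, in substance, a proof of Theorem \ref{p3mod9} --- the congruence $\overline{p}_3\big(3^{2\alpha+1}(3n+2)\big)\equiv 0\pmod{144}$ --- and that part is correct and follows the paper's own route: divisibility by $9$ via the $3$-dissection of $(s^2-qst+q^2t^2)^8$ applied to (\ref{induction}), divisibility by $16$ via (\ref{p3mod16}) together with the observation that $9^{\alpha}(9n+6)$ is neither a square nor twice a square, and then the two coprime moduli combined. But the statement you were asked to prove is the Corollary, which is a density claim: the set of $n$ with $144\mid\overline{p}_3(n)$ has lower density at least $\frac{1}{72}$. The deduction of that density from the congruence is entirely absent from your proposal, and it is the whole content of the paper's proof of this Corollary.

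To close the gap you must count. For each $\alpha\ge 1$ the progression $\big\{3^{2\alpha+1}(3n+2):n\ge 0\big\}$ contains about $X/3^{2\alpha+2}$ integers up to $X$, hence has natural density $3^{-(2\alpha+2)}$; and these sets are pairwise disjoint for distinct $\alpha$, because an element of the $\alpha$-th set has $3$-adic valuation exactly $2\alpha+1$ (since $3\nmid 3n+2$). Therefore their union has density
\[
\sum_{\alpha\ge 1}\frac{1}{3^{2\alpha+2}}=\frac{1}{3^4}+\frac{1}{3^6}+\cdots=\frac{1/81}{1-1/9}=\frac{1}{72},
\]
and on this union $\overline{p}_3$ is divisible by $144$ by Theorem \ref{p3mod9}. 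Without the disjointness observation and the geometric series, the constant $\frac{1}{72}$ is not justified.
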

\begin{proof}
The arithmetic sequences $\big\{{{3}^{2\alpha +1}}(3n+2):n\ge 0\big\}$ ($\alpha =1,2,\cdots $) mentioned in Theorem \ref{p3mod9}
 are disjoint for different $\alpha$, and they account for
\[\frac{1}{{{3}^{4}}}+\frac{1}{{{3}^{6}}}+\cdots =\frac{1}{72}\]
of all positive integers.
\end{proof}

\section{Congruences Modulo 7 and 11}
\begin{thm}\label{p3r3}
For any integer $n\ge 1$, we have
\[{{\overline{p}}_{3}}(7n)\equiv {{(-1)}^{n}}{{r}_{3}}(n) \pmod{7}.\]
\end{thm}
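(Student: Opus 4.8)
The plan is to work with power series coefficients modulo $7$ and to exploit the Frobenius-type congruence of Lemma~\ref{basic}. Replacing $q$ by $-q$ in the generating function gives
\[\sum_{n\ge 0}\overline{p}_3(n)(-q)^n=\frac{1}{\varphi(q)^3},\]
so the coefficient of $q^{7n}$ on the right is exactly $(-1)^{7n}\overline{p}_3(7n)=(-1)^n\overline{p}_3(7n)$. Since $\varphi(q)^7\equiv\varphi(q^7)\pmod 7$ by Lemma~\ref{basic}, I would first clear the reciprocal cube into a genuine power series by writing
\[\frac{1}{\varphi(q)^3}=\frac{\varphi(q)^4}{\varphi(q)^7}\equiv\frac{\varphi(q)^4}{\varphi(q^7)}\pmod 7.\]
The point of this step is that $\varphi(q)^4=\sum_{n\ge0}r_4(n)q^n$ is now an explicit theta power, divided only by a series in $q^7$.

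Next I would extract the terms $q^{7n}$ from both sides and rescale $q^7\mapsto q$. The structural fact that makes this clean is that $1/\varphi(q^7)$ is a power series in $q^7$ alone, so it factors through the dissection: collecting the $q^{7n}$-terms of $\varphi(q)^4/\varphi(q^7)$ and rescaling yields
\[\sum_{n\ge0}(-1)^n\overline{p}_3(7n)q^n\equiv\frac{1}{\varphi(q)}\sum_{n\ge0}r_4(7n)q^n\pmod 7.\]
Thus the whole problem reduces to understanding the subsequence $r_4(7n)$ modulo $7$.

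The heart of the argument — and the step I expect to be the real obstacle — is the congruence $r_4(7n)\equiv r_4(n)\pmod 7$ for every $n\ge1$. I would deduce it from Jacobi's four-square formula $r_4(n)=8\sum_{d\mid n,\,4\nmid d}d$: writing $n=7^a m$ with $7\nmid m$ and noting that $4\nmid 7^j e$ is equivalent to $4\nmid e$, the divisors of $7n$ contribute a factor $1+7+\cdots+7^{a+1}\equiv1\pmod 7$ times the divisor sum $\sum_{e\mid m,\,4\nmid e}e$, and the identical computation for $n$ produces the same residue $8\sum_{e\mid m,\,4\nmid e}e\pmod 7$. Granting this,
\[\sum_{n\ge0}r_4(7n)q^n\equiv\sum_{n\ge0}r_4(n)q^n=\varphi(q)^4\pmod 7,\]
and substituting back gives $\sum_{n\ge0}(-1)^n\overline{p}_3(7n)q^n\equiv\varphi(q)^4/\varphi(q)=\varphi(q)^3=\sum_{n\ge0}r_3(n)q^n\pmod 7$. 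Comparing coefficients of $q^n$ then yields $(-1)^n\overline{p}_3(7n)\equiv r_3(n)$, that is, $\overline{p}_3(7n)\equiv(-1)^n r_3(n)\pmod 7$. I note that the same scheme, with the exponent $4$ replaced by $8$ and $7$ by $11$ and requiring the analogous input $r_8(11n)\equiv r_8(n)\pmod{11}$, should deliver the companion relation $\overline{p}_3(11n)\equiv(-1)^n r_7(n)\pmod{11}$.
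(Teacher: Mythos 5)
Your proof is correct and follows essentially the same route as the paper: reduce $\varphi(q)^{7}$ to $\varphi(q^{7})$ modulo $7$, dissect on powers $q^{7n}$ so that $\varphi(q)^{4}=\sum r_{4}(n)q^{n}$ carries the arithmetic, invoke $r_{4}(7n)\equiv r_{4}(n)\pmod 7$, and cancel one factor of $\varphi(q)$ to land on $\varphi(q)^{3}=\sum r_{3}(n)q^{n}$. The only difference is cosmetic: the paper multiplies both sides by $\varphi(q)^{7}$ and cites the congruence $r_{4}(7n)\equiv r_{4}(n)$ as Lemma~\ref{r48modp}, whereas you divide by $\varphi(q)^{7}$ and supply your own (correct) proof of that congruence via Jacobi's four-square formula.
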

To prove this theorem, we need the following lemma.
\begin{lem}\label{r48modp}
(Cf. \cite[Lemma 12]{Wang}.)
For any prime $p\ge 3$, we have
 \[{{r}_{4}}(pn)\equiv {{r}_{4}}(n) \pmod{p}, \quad {{r}_{8}}(pn)\equiv {{r}_{8}}(n)  \pmod{p}.\]
\end{lem}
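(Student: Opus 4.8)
The plan is to reduce both congruences to the classical closed-form evaluations of $r_4$ and $r_8$ as divisor sums, due to Jacobi. For $n\ge 1$ one has
\[
r_4(n)=8\sum_{\substack{d\mid n\\ 4\nmid d}}d,\qquad r_8(n)=16\sum_{d\mid n}(-1)^{n-d}d^3.
\]
First I would write $n=p^{a}m$ with $p\nmid m$ and $a\ge 0$, so that $pn=p^{a+1}m$. Since every divisor of $p^{b}m$ factors uniquely as $p^{i}e$ with $0\le i\le b$ and $e\mid m$, each of the two divisor sums splits according to the power of $p$ dividing the divisor.

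The key observation is that, because $p\ge 3$ is odd, neither the condition $4\nmid d$ nor the parity of $d$ is affected by the factor $p^{i}$: indeed $4\nmid p^{i}e\iff 4\nmid e$, and $(-1)^{p^{i}e}=(-1)^{e}$. Hence the arithmetic weight attached to each divisor depends only on its $p$-free part $e$, and each sum factors as a geometric sum in $p$ times a sum over the divisors of $m$. Concretely, for $r_4$ I would obtain
\[
\frac{r_4(pn)}{8}=\Big(\sum_{i=0}^{a+1}p^{i}\Big)\sum_{\substack{e\mid m\\ 4\nmid e}}e,\qquad \frac{r_4(n)}{8}=\Big(\sum_{i=0}^{a}p^{i}\Big)\sum_{\substack{e\mid m\\ 4\nmid e}}e,
\]
and analogously for $r_8$, where the outer factor becomes $\sum_{i=0}^{a+1}p^{3i}$ resp.\ $\sum_{i=0}^{a}p^{3i}$ and the common inner factor is $\sum_{e\mid m}(-1)^{n-e}e^{3}$.

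The proof then closes immediately: every geometric sum $\sum_{i=0}^{b}p^{i}$ or $\sum_{i=0}^{b}p^{3i}$ is congruent modulo $p$ to its $i=0$ term, namely $\equiv 1\pmod{p}$, since all higher terms carry a factor of $p$. Therefore both $r_4(pn)$ and $r_4(n)$ are congruent mod $p$ to $8\sum_{e\mid m,\,4\nmid e}e$, while both $r_8(pn)$ and $r_8(n)$ are congruent mod $p$ to $16\sum_{e\mid m}(-1)^{n-e}e^{3}$, which yields the two claimed congruences simultaneously.

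I expect the only delicate point to be the bookkeeping with the eight-square formula: one must verify that the sign behaves correctly under the substitution $d=p^{i}e$, i.e.\ that $(-1)^{pn-p^{i}e}=(-1)^{n-e}$, which again hinges on $p$ being odd (so that $pn\equiv n$ and $p^{i}e\equiv e\pmod 2$). Everything else is a routine factorization of divisor sums, and no modular-forms machinery is required.
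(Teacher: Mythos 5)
Your proof is correct. Note that the paper itself does not prove this lemma at all---it simply cites it as Lemma 12 of \cite{Wang}---so you have supplied a complete, self-contained argument where the paper offers only a reference. Your route is the natural elementary one: invoke Jacobi's evaluations $r_4(n)=8\sum_{d\mid n,\,4\nmid d}d$ and $r_8(n)=16\sum_{d\mid n}(-1)^{n-d}d^3$, write $n=p^am$ with $p\nmid m$, and observe that each divisor sum factors as a geometric series in $p$ (which is $\equiv 1\pmod{p}$) times a sum depending only on $m$; the two points you flag---that oddness of $p$ preserves both the condition $4\nmid d$ and the parity of $d$, hence the sign $(-1)^{n-d}$---are exactly the places where care is needed, and you handle them correctly. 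The only cosmetic quibble is that Jacobi's formulas are stated for $n\ge 1$, so the case $n=0$ (where $r_4(0)=r_8(0)=1$ and the congruences are trivial) should be set aside in one clause; this does not affect the paper's applications, which compare coefficients of $q^{pn}$ for $n\ge 1$ and are trivial at $n=0$ anyway.
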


\begin{proof}[Proof of Theorem \ref{p3r3}]
We have
\[\varphi {{(q)}^{7}}\sum\limits_{n\ge 0}{{{\overline{p}}_{3}}(n){{(-q)}^{n}}}=\varphi {{(q)}^{4}}=\sum\limits_{n\ge 0}{{{r}_{4}}(n){{q}^{n}}}.\]
By Lemma \ref{basic}, we have $\varphi {{(q)}^{7}}\equiv \varphi ({{q}^{7}})$ (mod $7$). Hence
\[\varphi ({{q}^{7}})\sum\limits_{n\ge 0}{{{\overline{p}}_{3}}(n){{(-q)}^{n}}}\equiv \sum\limits_{n\ge 0}{{{r}_{4}}(n){{q}^{n}}} \pmod{7}.\]
Collecting all the terms of the form ${{q}^{7n}}$ on both sides of this identity, then replacing ${{q}^{7}}$  by $q$ and applying Lemma \ref{r48modp} with $p=7$, we obtain
\[\varphi (q)\sum\limits_{n\ge 0}{{{\overline{p}}_{3}}(7n){{(-q)}^{n}}}\equiv \sum\limits_{n\ge 0}{{{r}_{4}}(7n){{q}^{n}}}\equiv \sum\limits_{n\ge 0}{{{r}_{4}}(n){{q}^{n}}}=\varphi {{(q)}^{4}}  \pmod{7}.\]
This implies
\begin{equation}\label{mod7eq}
\sum\limits_{n\ge 0}{{{\overline{p}}_{3}}(7n){{(-q)}^{n}}}\equiv \varphi {{(q)}^{3}}=\sum\limits_{n\ge 0}{{{r}_{3}}(n){{q}^{n}}}  \pmod{7}.
\end{equation}
The theorem follows by comparing the coefficients of ${{q}^{n}}$ on both sides.
\end{proof}

\begin{thm}
For any nonnegative integers $\alpha$ and $n$,  we have
\[{{\overline{p}}_{3}}(7\cdot {{4}^{\alpha +1}}n)\equiv {{(-1)}^{n}}{{\overline{p}}_{3}}(7n) \pmod{7}\]
and
\[{{\overline{p}}_{3}}({{4}^{\alpha }}(56n+49))\equiv 0 \pmod{7}.\]
\end{thm}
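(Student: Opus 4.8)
The plan is to prove both congruences using the $r_3$-relation established in Theorem \ref{p3r3}, together with classical facts about the behaviour of $r_3(n)$ under multiplication by $4$. The key classical input is the identity $r_3(4n)=r_3(n)$, which holds because a representation of $4n$ as a sum of three squares must have all three squares even (since squares are $\equiv 0,1\pmod 4$ and the only way three of them sum to $0\pmod 4$ is if all are even), giving a bijection $(x,y,z)\mapsto(x/2,y/2,z/2)$ with representations of $n$. First I would record this as the central lemma. Then, starting from Theorem \ref{p3r3} in the form $\overline{p}_3(7m)\equiv(-1)^m r_3(m)\pmod 7$, I would substitute $m=4^{\alpha+1}n$ and $m=n$ respectively.

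For the first congruence, write $\overline{p}_3(7\cdot 4^{\alpha+1}n)\equiv(-1)^{4^{\alpha+1}n}r_3(4^{\alpha+1}n)\pmod 7$. Since $4^{\alpha+1}n$ is even, the sign is $+1$, and iterating $r_3(4n)=r_3(n)$ a total of $\alpha+1$ times gives $r_3(4^{\alpha+1}n)=r_3(n)$. On the other side, $\overline{p}_3(7n)\equiv(-1)^n r_3(n)\pmod 7$, so $r_3(n)\equiv(-1)^n\overline{p}_3(7n)\pmod 7$. Substituting yields $\overline{p}_3(7\cdot 4^{\alpha+1}n)\equiv(-1)^n\overline{p}_3(7n)\pmod 7$, which is exactly the claim. (One must treat the degenerate case $n=0$ separately or simply note that Theorem \ref{p3r3} is stated for $n\ge 1$ while $\overline{p}_3(0)=1$; here both sides equal $1$ for $n=0$, so the congruence still holds.)

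For the second congruence, the target index is $4^\alpha(56n+49)=7\cdot 4^\alpha(8n+7)$, so I would apply Theorem \ref{p3r3} with $m=4^\alpha(8n+7)$ to get $\overline{p}_3\big(4^\alpha(56n+49)\big)\equiv(-1)^m r_3\big(4^\alpha(8n+7)\big)\pmod 7$. Applying $r_3(4n)=r_3(n)$ a total of $\alpha$ times reduces this to $r_3(8n+7)$. The decisive point is the classical theorem of Legendre/Gauss that a positive integer is a sum of three squares if and only if it is not of the form $4^a(8b+7)$; in particular $r_3(8n+7)=0$ for every $n\ge 0$. Hence the right-hand side vanishes and $\overline{p}_3\big(4^\alpha(56n+49)\big)\equiv 0\pmod 7$.

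The main obstacle, such as it is, lies not in the $q$-series manipulations (those are already packaged in Theorem \ref{p3r3}) but in invoking the two number-theoretic facts about $r_3$ at the right level of rigour: the multiplicativity-by-$4$ identity $r_3(4n)=r_3(n)$ and the three-square theorem characterising which integers are \emph{not} representable. I would state both explicitly, either as a short lemma or with a citation to a standard reference such as \cite{Bruce}, since they carry the entire weight of the argument. Once these are in hand, both congruences follow by the substitutions and sign bookkeeping described above, and the only care needed is tracking the parity of the exponent in $(-1)^m$ — which is automatic since every index of the form $4^{\alpha+1}n$ or $4^\alpha(8n+7)$ appearing here is handled by noting $4^j$ is even for $j\ge 1$ and that $8n+7$ is odd so $4^\alpha(8n+7)\equiv 4^\alpha\cdot 1\pmod 2$, giving the sign cleanly in each case.
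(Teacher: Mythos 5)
Your proof is correct, but it takes a genuinely different route from the paper. The paper never leaves the world of $q$-series: starting from $\sum_{n\ge 0}\overline{p}_3(7n)q^n\equiv\varphi(-q)^3\pmod 7$, it expands $\varphi(-q)=\varphi(q^4)-2q\psi(q^8)$ (Lemma \ref{keyid1}), cubes, and extracts the $q^{4n}$ terms to get $\overline{p}_3(28n)\equiv(-1)^n\overline{p}_3(7n)$ and the $q^{4n+3}$ terms to get $\sum_n\overline{p}_3(7(4n+3))q^n\equiv-8\psi(q^2)^3$, whose odd-index coefficients vanish, yielding $\overline{p}_3(7(8n+7))\equiv 0$; the general $\alpha$ then follows by iteration. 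You instead pass through Theorem \ref{p3r3} and reduce everything to two arithmetic facts about $r_3$: the elementary identity $r_3(4n)=r_3(n)$ (which you prove correctly via the mod-$4$ parity argument) and the vanishing $r_3\bigl(4^a(8b+7)\bigr)=0$. Both arguments are sound, and your sign bookkeeping is right. One point worth sharpening: you invoke the full Legendre--Gauss three-square theorem, but you only need its trivial direction --- squares are $0,1,4\pmod 8$, and no three of these sum to $7\pmod 8$, so $r_3(8n+7)=0$ directly; combined with $r_3(4n)=r_3(n)$ this disposes of $4^a(8b+7)$ without citing the deep converse. With that observation your proof is entirely elementary and arguably more transparent than the paper's, since it exposes the second congruence as a disguised form of the non-representability of $8n+7$ as a sum of three squares; what the paper's dissection approach buys is self-containedness within the theta-function machinery already set up in Section 2, and it produces the intermediate identity $\sum_n\overline{p}_3(7(4n+3))q^n\equiv-8\psi(q^2)^3\pmod 7$ as a by-product.
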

\begin{proof}
From (\ref{mod7eq}) and Lemma \ref{keyid1} we see that
\begin{equation}\label{p3expannew}
\begin{split}
&\quad \sum\limits_{n=0}^{\infty}{\overline{p}_{3}(7n)q^{n}}\equiv \varphi(-q)^{3} =(\varphi(q^4)-2q\psi(q^8))^{3} \\
&=\varphi(q^4)^3-6q\varphi(q^4)^2\psi(q^8)+12q^2\psi(q^8)^2\varphi(q^4)-8q^3\psi(q^8)^3 \pmod{7}.
\end{split}
\end{equation}
Extracting the terms of the form $q^{4n}$ on both sides, replacing $q^4$ by $q$, we obtain
\[\sum\limits_{n=0}^{\infty}{\overline{p}_{3}(7\cdot 4n)q^{n}}\equiv \varphi(q)^3  =\sum\limits_{n=0}^{\infty}{(-1)^{n}\overline{p}_{3}(7n)q^{n}} \pmod{7}.\]
Hence we have $\overline{p}_{3}(7\cdot 4n) \equiv (-1)^{n}\overline{p}_{3}(7n)$ (mod 7), from which the first congruence follows.

Extracting the terms of the form $q^{4n+3}$ on both sides of (\ref{p3expannew}), dividing by $q^3$ and replacing $q^4$ by $q$, we obtain
\[\sum\limits_{n=0}^{\infty}{\overline{p}_{3}(7\cdot (4n+3))q^{n}}\equiv -8\psi(q^2)^{3} \pmod{7}.\]
Since the terms of the form $q^{2n+1}$ do not appear on the right hand side, we deduce that $\overline{p}_{3}(7(8n+7)) \equiv 0$ (mod 7). Combining this congruence with the relation ${{\overline{p}}_{3}}(7\cdot {{4}^{\alpha +1}}n)\equiv {{(-1)}^{n}}{{\overline{p}}_{3}}(7n) \pmod{7}$, we proved the second congruence.
\end{proof}

\begin{thm}
Let $p\ge 3$ be a prime, and $N$ be a positive integer which is coprime to $p$. Let $\alpha $ be any nonnegative integer.\\
(1)  If $p\equiv 1$ \text{\rm{(mod $7$)}}, then ${{\overline{p}}_{3}}(7{{p}^{14\alpha +13}}N)\equiv 0$ \text{\rm{(mod $7$)}}. \\
(2) If $p\equiv 2,3,4,5,6$  \text{\rm{(mod $7$)}}, then ${{\overline{p}}_{3}}(7{{p}^{12\alpha +11}}N)\equiv 0$ \text{\rm{(mod $7$)}}. \\
(3) If $p\equiv 6$ \text{\rm{(mod $7$)}}, then ${{\overline{p}}_{3}}(7{{p}^{3}}N)\equiv 0$ \text{\rm{(mod $7$)}}.
\end{thm}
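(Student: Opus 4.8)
The plan is to reduce everything to the structure of the congruence \eqref{mod7eq}, namely $\sum_{n\ge 0}\overline{p}_3(7n)(-q)^n\equiv\varphi(q)^3\pmod 7$, and then exploit the multiplicative/recursive behaviour of $r_3(n)$ (equivalently of $\varphi(q)^3$) under prime dissection. By Theorem~\ref{p3r3} we have $\overline{p}_3(7n)\equiv(-1)^n r_3(n)\pmod 7$, so each claimed congruence $\overline{p}_3(7p^{e}N)\equiv 0\pmod 7$ is equivalent to $r_3(p^{e}N)\equiv 0\pmod 7$ for $N$ coprime to $p$. Thus the entire theorem is really a set of divisibility statements for the sum-of-three-squares function modulo $7$, and the $(-1)^n$ factor is irrelevant to vanishing.

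First I would obtain a usable formula or recursion for $r_3$ along powers of an odd prime $p$. The natural tool is the theory of the theta series $\varphi(q)^3$ as a modular form of weight $3/2$, or more elementarily the classical Gauss/Dirichlet formula expressing $r_3(n)$ in terms of class numbers (Hurwitz class numbers $H(n)$), together with the Hecke-type recursion $r_3(p^2 m)$ in terms of $r_3(m)$, $r_3(m/p^2)$ and the Legendre symbol $\left(\frac{-m}{p}\right)$. Working modulo $7$, I would track the eigenvalue of the relevant Hecke operator $T_{p^2}$ acting on the one-dimensional space spanned by $\varphi(q)^3$ (or its image), reducing the exponent bookkeeping to a linear recurrence mod $7$ whose period depends on the residue of $p$ modulo $7$. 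The three cases then correspond to the multiplicative order of the relevant eigenvalue: order dividing $14$ when $p\equiv1$, order dividing $12$ when $p$ is a primitive-type residue $2,3,4,5,6$, and the special short period $3$ when $p\equiv 6\pmod 7$ (where $p\equiv -1$ forces an extra collapse).

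Concretely, for part (3) I would try to verify directly, using the explicit three-squares formula and the fact that $p\equiv 6\equiv -1\pmod 7$, that $r_3(p^3 N)\equiv 0\pmod 7$ whenever $\gcd(N,p)=1$; this is the cleanest case and serves as the base of the recursion. For parts (1) and (2) I would iterate the $T_{p^2}$-recursion to show that the coefficient vector $\big(r_3(p^{2k+1}N)\big)_{k\ge0}$ is periodic modulo $7$ with the stated period ($14\alpha+13$ and $12\alpha+11$ mark the vanishing positions within that period), so that the exponents $14\alpha+13$ and $12\alpha+11$ land exactly on a zero of the recurrence for every $\alpha\ge 0$. The main obstacle I anticipate is pinning down the exact Hecke eigenvalue of $\varphi(q)^3$ modulo $7$ and proving its multiplicative order realizes the claimed periods $13,11,3$ for each residue class of $p$; once the recurrence and its eigenvalue are made explicit mod $7$, the case analysis by $p\bmod 7$ is a finite check, and the role of $N$ coprime to $p$ is merely to ensure we stay on a single orbit of the $p$-power dissection.
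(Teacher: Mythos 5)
Your plan is essentially the paper's proof: reduce via Theorem \ref{p3r3} to statements about $r_3(p^{2k+1}N)\bmod 7$, invoke the $T_{p^2}$-type recursion $r_3(p^2n)+\big(\tfrac{-n}{p}\big)r_3(n)+pr_3(n/p^2)=(p+1)r_3(n)$, iterate it to the closed form $r_3(p^{2k+1}N)\equiv\frac{p^{k+1}-1}{p-1}\,r_3(pN)$, and do the case analysis on $p\bmod 7$ (the paper needs neither the class-number formula nor any genuine modular-forms input beyond this recursion). The only quibble is your phrasing of ``periods $13,11,3$'' --- the relevant periods in $k$ are $7$ and $6$ (i.e., $14$ and $12$ in the exponent), with $13$, $11$, $3$ being the vanishing positions --- but the argument you outline carries through exactly as in the paper.
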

\begin{proof}
From \cite[(4)]{3square} we have
\begin{equation}\label{r3start}
r_{3}(p^2n)+\Big(\frac{-n}{p}\Big)r_3(n)+pr_{3}(n/p^2)=(p+1)r_{3}(n),
\end{equation}
here $(\frac{\cdot}{p})$ is the Legendre symbol and we take $r_{3}(n/p^2)=0$ unless $p^2|n$.
Replacing $n$ by $np$, we see that
\begin{displaymath}
r_{3}(p^3n)+pr_{3}(n/p)=(p+1)r_{3}(pn).
\end{displaymath}

By Theorem \ref{p3r3}, we have
\begin{equation}\label{r3relation}
\overline{p}_{3}(7p^3n)+p\overline{p}_{3}(7n/p)\equiv (p+1)\overline{p}_{3}(7pn) \pmod{7}.
\end{equation}

 From (\ref{r3relation}), by iterating we obtain
\[\overline{p}_{3}(7p^{2k+1}n)\equiv \frac{p(p^k-1)}{p-1}\Big(\overline{p}_{3}(7pn)-\overline{p}_{3}\big(7n/p\big)\Big)+\overline{p}_{3}(7pn) \pmod{7}.\]
Let $n=N$, which is coprime to $p$. We get $\overline{p}_{3}(7n/p)=0$ and thus
\begin{equation}\label{iterate}
\overline{p}_{3}(7p^{2k+1}N) \equiv \Big(\frac{p(p^k-1)}{p-1}+1\Big)\overline{p}_{3}(7pN) \pmod{7}.
\end{equation}

(1) Let $k=7\alpha +6$ in (\ref{iterate}). Since $p\equiv 1$ (mod 7), we have
\[\frac{{{p}^{7\alpha +6}}-1}{p-1}=1+p+\cdots +{{p}^{7\alpha +5}}\equiv -1 \pmod{7}.\]
Hence by (\ref{iterate}) we deduce that  $\overline{p}_{3}(7p^{14\alpha +13}N)\equiv  0$ (mod 7).

(2)  Let $k=6\alpha +5$ in (\ref{iterate}). In this case, we have ${{p}^{6\alpha +6}}\equiv 1$ (mod $7$). Hence from (\ref{iterate}) we obtain
\[\overline{p}_{3}(7p^{12\alpha +11}N)\equiv \Big(\frac{p^{6\alpha+6}-p}{p-1}+1\Big)\overline{p}_{3}(7pN) \equiv 0 \pmod{7}.\]

(3) Let $n=N$ in (\ref{r3relation}). Since $p+1 \equiv 0$ (mod 7), the congruence follows immediately.
\end{proof}

\begin{thm}
For any integers $n\ge 0$ and $\alpha \ge 1$, we have
\[\overline{p}_{3}(7^4\cdot n) \equiv \overline{p}_{3}(7^2\cdot n) \pmod{7}\]
and
\[{{\overline{p}}_{3}}\big({{7}^{2\alpha +1}}(7n+3)\big)\equiv {{\overline{p}}_{3}}\big({{7}^{2\alpha +1}}(7n+5)\big)\equiv {{\overline{p}}_{3}}\big({{7}^{2\alpha +1}}(7n+6)\big)\equiv 0 \pmod{7}.\]
\end{thm}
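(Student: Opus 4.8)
The plan is to obtain the first congruence as the $p=7$ specialization of the relation (\ref{r3relation}) proved just above, and then to bootstrap from it. Setting $p=7$ in (\ref{r3relation}), the middle term $p\,\overline{p}_3(7n/p)$ carries the coefficient $p=7$ and hence vanishes modulo $7$, while $p+1=8\equiv 1\pmod 7$; what survives is exactly
\[
\overline{p}_3(7^4 n)\equiv \overline{p}_3(7^2 n)\pmod 7,
\]
which is the first assertion.

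Next I would use this self-map to collapse every odd power $7^{2\alpha+1}$ down to $7^3$. Replacing $n$ by $7n$ in the congruence just obtained gives $\overline{p}_3(7^5 n)\equiv \overline{p}_3(7^3 n)\pmod 7$, and an induction on $\alpha$ (writing $7^{2\alpha+3}=7^4\cdot 7^{2\alpha-1}$ and applying $\overline{p}_3(7^4 m)\equiv \overline{p}_3(7^2 m)$ with $m=7^{2\alpha-1}n$) yields, for all $\alpha\ge 1$,
\[
\overline{p}_3(7^{2\alpha+1}m)\equiv \overline{p}_3(7^3 m)\pmod 7.
\]
Thus the three congruences will follow once I show $\overline{p}_3(7^3 m)\equiv 0\pmod 7$ whenever $m\equiv 3,5,6\pmod 7$.

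To handle $\overline{p}_3(7^3 m)=\overline{p}_3(7\cdot 49m)$ I would invoke Theorem \ref{p3r3} with argument $49m$, which gives $\overline{p}_3(7^3 m)\equiv (-1)^{49m}r_3(49m)=(-1)^m r_3(49m)\pmod 7$. The residue of $r_3(49m)$ then comes from the recurrence (\ref{r3start}): taking $p=7$ and $n=m$ there, the term $7\,r_3(m/49)$ is a multiple of $7$ and disappears, and $p+1=8\equiv 1$, so
\[
r_3(49m)\equiv\Big(1-\big(\tfrac{-m}{7}\big)\Big)r_3(m)\pmod 7,
\]
whence $\overline{p}_3(7^3 m)\equiv (-1)^m\big(1-(\tfrac{-m}{7})\big)r_3(m)\pmod 7$. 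This whole quantity collapses precisely when $-m$ is a quadratic residue modulo $7$.

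The final step is the Legendre-symbol bookkeeping, which is exactly what pins down the residues $3,5,6$. For $m\equiv 3,5,6\pmod 7$ we have $-m\equiv 4,2,1\pmod 7$ respectively, and each of $4,2,1$ is a nonzero quadratic residue mod $7$, so $(\tfrac{-m}{7})=1$ and the factor $1-(\tfrac{-m}{7})$ vanishes; hence $\overline{p}_3(7^3 m)\equiv 0\pmod 7$, and the reduction of the previous paragraph propagates this to every $7^{2\alpha+1}$. I expect no serious obstacle here: the only places demanding care are keeping the parities straight when passing through Theorem \ref{p3r3} (so that $(-1)^{49m}=(-1)^m$) and checking the quadratic-residue values, since the complementary residues $1,2,4$—for which $-m$ is a \emph{nonresidue} and $1-(\tfrac{-m}{7})=2\not\equiv 0$—are exactly the ones that do not produce a congruence, explaining why only $3,5,6$ occur.
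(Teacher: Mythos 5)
Your proposal is correct and follows essentially the same route as the paper: both obtain the first congruence by specializing the $r_3$ recurrence at $p=7$ (where the Legendre symbol or the factor $p$ kills the extra terms and $p+1\equiv 1$), reduce $7^{2\alpha+1}$ to $7^3$ by iterating that relation, and then dispose of the residues $3,5,6$ via the observation that $-3,-5,-6$ are quadratic residues mod $7$. The only cosmetic difference is that the paper first transports (\ref{r3start}) into a congruence for $\overline{p}_3$ (its equation (\ref{p3new})) and works there, while you keep the computation on the $r_3$ side and translate at the end via Theorem \ref{p3r3}; the content is identical.
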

\begin{proof}
By Theorem \ref{p3r3} and (\ref{r3start}) we obtain
\begin{equation}\label{p3new}
\overline{p}_{3}(7\cdot p^2n)+\Big(\frac{-n}{p}\Big)\overline{p}_{3}(7n)+p\overline{p}_{3}(7n/p^2) \equiv (p+1)\overline{p}_{3}(7n) \pmod{7}.
\end{equation}
Let $p=7$ and we replace $n$ by $7n$. We deduce that $\overline{p}_{3}(7^4\cdot n) \equiv \overline{p}_{3}(7^2\cdot n)$ (mod 7).

Let $p=7$ and $n =7m+r,r\in \{3,5,6\}$ in (\ref{p3new}). Note that $\big(\frac{-r}{7}\big)=1$, it follows from (\ref{p3new}) that ${\overline{p}_{3}}(7^3(7m+r))\equiv 0$ (mod $7$). Combining this congruence with the first congruence relation, by induction on $\alpha$,  we complete our proof of the theorem.
\end{proof}

\begin{cor}\label{mod7cor}
${{\overline{p}}_{3}}(n)$ is divisible by $7$ for at least $1/784$ of all the time.
\end{cor}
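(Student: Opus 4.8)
The plan is to read off three families of arithmetic progressions on which $\overline{p}_3$ is forced to vanish modulo $7$ by the preceding theorem, and then to add up their densities. Concretely, for each $\alpha\ge 1$ and each $r\in\{3,5,6\}$, every integer of the form $7^{2\alpha+1}(7n+r)$ with $n\ge 0$ satisfies $\overline{p}_3\equiv 0\pmod 7$. It therefore suffices to show that the union of these progressions, taken over all $\alpha\ge 1$ and all $r\in\{3,5,6\}$, has (natural) density at least $1/784$.

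First I would check that all of these progressions are pairwise disjoint. For a fixed $\alpha$ the three values $r\in\{3,5,6\}$ are distinct nonzero residues modulo $7$, so the three progressions share no common element. For distinct values of $\alpha$, disjointness follows from the $7$-adic valuation: since $r\not\equiv 0\pmod 7$, every element of $\{7^{2\alpha+1}(7n+r):n\ge 0\}$ has $7$-adic valuation exactly $2\alpha+1$, and these valuations are distinct for distinct $\alpha$.

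Next I would compute the density. A single progression $\{7^{2\alpha+1}(7n+r):n\ge 0\}$ is an arithmetic progression with common difference $7^{2\alpha+2}$, and hence has density $1/7^{2\alpha+2}$. Summing over the three admissible residues $r$ and over all $\alpha\ge 1$ produces a geometric series,
\[
\sum_{\alpha\ge 1}\frac{3}{7^{2\alpha+2}}=\frac{3}{7^4}\cdot\frac{1}{1-7^{-2}}=\frac{3}{7^4}\cdot\frac{49}{48}=\frac{1}{784},
\]
which yields the claimed lower bound.

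I do not expect any genuine obstacle here; the argument runs parallel to the density computation following Theorem \ref{p3mod9} that gave the bound $1/72$. The only points deserving care are the pairwise disjointness of the progressions, which is most cleanly settled by the $7$-adic valuation argument above, and bookkeeping the factor $3$ together with the exponent $2\alpha+2$ so that the geometric series sums to exactly $1/784$.
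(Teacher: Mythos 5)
Your proposal is correct and follows the same route as the paper: it uses the three families $7^{2\alpha+1}(7n+r)$ with $r\in\{3,5,6\}$ from the preceding theorem, notes their pairwise disjointness, and sums the geometric series $\sum_{\alpha\ge 1}3/7^{2\alpha+2}=1/784$. Your $7$-adic valuation justification of disjointness is a nice explicit touch that the paper merely asserts, but the argument is otherwise identical.
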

\begin{proof}
Note that for different pairs $(\alpha, r)$, where $\alpha \ge 1$ and $r\in \{3,5,6\}$, the arithmetic sequences $\big\{{{7}^{2\alpha +1}}(7n+r):n=0,1,2,\cdots \big\}$ are disjoint, and they account for
\[3\Big(\frac{1}{{{7}^{4}}}+\frac{1}{{{7}^{6}}}+\cdots +\frac{1}{{{7}^{2\alpha +2}}}+\cdots \Big) =\frac{1}{784}\]
of all nonnegative integers.
\end{proof}

\begin{thm}\label{mod11}
For any integers $n\ge 0$, we have
\[{{\overline{p}}_{3}}(11n)\equiv {{(-1)}^{n}}{{r}_{7}}(n) \pmod{11}.\]
\end{thm}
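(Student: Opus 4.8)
The plan is to mirror the proof of Theorem \ref{p3r3}, replacing the prime $7$ everywhere by $11$. In that argument the key was to multiply the generating function $\sum_{n\ge 0}\overline{p}_3(n)(-q)^n = 1/\varphi(q)^3$ by a power of $\varphi(q)$ that produces a clean theta power, namely $\varphi(q)^4=\sum r_4(n)q^n$, and then to pass from $\varphi(q)^4$ down to $\varphi(q)^3=\sum r_3(n)q^n$ after extracting an arithmetic progression. For the modulus $11$ the role of $\varphi(q)^4$ and $\varphi(q)^3$ will be played by $\varphi(q)^8=\sum r_8(n)q^n$ and $\varphi(q)^7=\sum r_7(n)q^n$; the exponent $8$ appears precisely because $11-3=8$, and this is exactly the instance $r_8(pn)\equiv r_8(n)$ covered by Lemma \ref{r48modp} for the prime $p=11\ge 3$.

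Concretely, I would first start from $\sum_{n\ge 0}\overline{p}_3(n)(-q)^n = 1/\varphi(q)^3$ and multiply through by $\varphi(q)^{11}$, obtaining
\[
\varphi(q)^{11}\sum_{n\ge 0}\overline{p}_3(n)(-q)^n=\varphi(q)^8=\sum_{n\ge 0}r_8(n)q^n.
\]
Next, by the second congruence of Lemma \ref{basic} applied with $p=11$, we have $\varphi(q)^{11}\equiv\varphi(q^{11})\pmod{11}$, so that
\[
\varphi(q^{11})\sum_{n\ge 0}\overline{p}_3(n)(-q)^n\equiv\sum_{n\ge 0}r_8(n)q^n\pmod{11}.
\]

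The crucial step is then to collect the terms of the form $q^{11n}$ on both sides. Since $\varphi(q^{11})=\sum_{j}q^{11j^2}$ involves only exponents divisible by $11$, the $q^{11n}$-part of the left side equals $\varphi(q^{11})$ times the $q^{11m}$-part of $\sum\overline{p}_3(n)(-q)^n$; after replacing $q^{11}$ by $q$ and using $(-1)^{11m}=(-1)^m$, this becomes $\varphi(q)\sum_{m\ge 0}\overline{p}_3(11m)(-q)^m$. On the right, the $q^{11n}$-part is $\sum_m r_8(11m)q^{11m}$, which, upon replacing $q^{11}$ by $q$ and applying Lemma \ref{r48modp} in the form $r_8(11m)\equiv r_8(m)\pmod{11}$, is congruent to $\sum_m r_8(m)q^m=\varphi(q)^8$. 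Hence $\varphi(q)\sum_{m\ge 0}\overline{p}_3(11m)(-q)^m\equiv\varphi(q)^8\pmod{11}$.

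Finally, since $\varphi(q)$ is a formal power series with constant term $1$, it is invertible over the integers, so I may cancel one factor of $\varphi(q)$ to get $\sum_{m\ge 0}\overline{p}_3(11m)(-q)^m\equiv\varphi(q)^7=\sum_{m\ge 0}r_7(m)q^m\pmod{11}$; comparing coefficients of $q^n$ yields $(-1)^n\overline{p}_3(11n)\equiv r_7(n)$, which is the desired statement. I do not expect a genuine obstacle, since every step runs parallel to Theorem \ref{p3r3}; the only points requiring care are the coefficient-extraction bookkeeping (verifying that multiplication by $\varphi(q^{11})$ really isolates the progression $11m$ and that the sign works out to $(-1)^n$) and confirming that the relevant case of Lemma \ref{r48modp} is the one for $r_8$ rather than $r_4$.
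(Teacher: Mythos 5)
Your proposal is correct and follows essentially the same route as the paper: multiply by $\varphi(q)^{11}\equiv\varphi(q^{11})\pmod{11}$, extract the $q^{11n}$ terms, invoke Lemma \ref{r48modp} for $r_8$ with $p=11$, and cancel one factor of $\varphi(q)$ to land on $\varphi(q)^7=\sum r_7(n)q^n$. The bookkeeping points you flag (the isolation of the progression $11m$ and the sign $(-1)^{11m}=(-1)^m$) are handled exactly as you describe.
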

\begin{proof}
By definition we have
\[\varphi {{(q)}^{11}}\sum\limits_{n\ge 0}{{{\overline{p}}_{3}}(n){{(-q)}^{n}}}=\varphi {{(q)}^{8}}=\sum\limits_{n\ge 0}{{{r}_{8}}(n){{q}^{n}}}.\]
By Lemma \ref{basic}, we have  $\varphi {{(q)}^{11}}\equiv \varphi ({{q}^{11}})$ (mod  $11$). Hence
\[\varphi ({{q}^{11}})\sum\limits_{n\ge 0}{{{\overline{p}}_{3}}(n){{(-q)}^{n}}}\equiv \sum\limits_{n\ge 0}{{{r}_{8}}(n){{q}^{n}}} \pmod{11}.\]
Collecting all the terms of the form ${{q}^{11n}}$ on both sides, then replacing ${{q}^{11}}$ by $q$ and applying Lemma \ref{r48modp} with $p=11$,  we get
\[\varphi (q)\sum\limits_{n\ge 0}{{{\overline{p}}_{3}}(11n){{(-q)}^{n}}}\equiv \sum\limits_{n\ge 0}{{{r}_{8}}(11n){{q}^{n}}} \equiv \sum\limits_{n\ge 0}{{{r}_{8}}(n){{q}^{n}}}={\varphi (q)}^{8} \pmod{11}.\]
Hence
\[\sum\limits_{n\ge 0}{{{\overline{p}}_{3}}(11n){{(-q)}^{n}}}\equiv \varphi {{(q)}^{7}}=\sum\limits_{n\ge 0}{{{r}_{7}}(n){{q}^{n}}} \pmod{11}.\]
The theorem follows by comparing the coefficients of ${{q}^{n}}$ on  both sides.
\end{proof}

\begin{thm}\label{twomod11}
Let $p\ge 3$ be a prime, and $N$ be  a positive integer which is coprime to $p$. Let  $\alpha $ be any nonnegative  integer. \\
(1) We have $\overline{p}_{3}(11^3 \cdot n) \equiv \overline{p}_{3}(11n)$ \text{\rm{(mod 11)}}. \\
(2) If $p\equiv 1,3,4,5,9$ \text{\rm{(mod $11$)}}, then ${{\overline{p}}_{3}}(11{{p}^{22\alpha +21}}N)\equiv 0$ \text{\rm{(mod  $11$)}}. \\
(3) If $p\equiv 2,6,7,8,10$ \text{\rm{(mod $11$)}}, then ${{\overline{p}}_{3}}(11{{p}^{4\alpha +3}}N)\equiv 0$ \text{\rm{(mod  $11$)}}.
\end{thm}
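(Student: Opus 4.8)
The plan is to run the argument of the preceding mod-$7$ theorems verbatim, with two substitutions: Theorem \ref{p3r3} is replaced by Theorem \ref{mod11}, and the sum-of-three-squares identity (\ref{r3start}) is replaced by its sum-of-seven-squares analogue. Since $\sum_{n\ge 0}r_{7}(n)q^{n}=\varphi(q)^{7}$ is the weight-$7/2$ counterpart of $\varphi(q)^{3}$ and is a Hecke eigenform, the operator $T_{p^{2}}$ should yield, for every prime $p\ge 3$,
\begin{equation}\label{r7rec}
r_{7}(p^{2}n)+\Big(\frac{-n}{p}\Big)p^{2}r_{7}(n)+p^{5}r_{7}(n/p^{2})=(p^{5}+1)r_{7}(n),
\end{equation}
with $r_{7}(n/p^{2})=0$ unless $p^{2}\mid n$. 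The exponents $p^{2},p^{5}$ and the eigenvalue $p^{5}+1$ are forced by the weight $7/2=3+\tfrac12$, exactly as $1,p$ and $p+1$ arise in (\ref{r3start}) from the weight $3/2=1+\tfrac12$; one can confirm (\ref{r7rec}) numerically, e.g. it predicts $r_{7}(9)=3542$ from $p=3,\,n=1$, which agrees with a direct count.

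For part (1) I would put $p=11$ in (\ref{r7rec}). Modulo $11$ the terms $p^{2}r_{7}(n)$ and $p^{5}r_{7}(n/p^{2})$ vanish and $p^{5}+1\equiv 1$, so $r_{7}(121n)\equiv r_{7}(n)\pmod{11}$ for every $n$. Because $p$ is odd, Theorem \ref{mod11} then gives $\overline{p}_{3}(11^{3}n)\equiv(-1)^{n}r_{7}(121n)\equiv(-1)^{n}r_{7}(n)\equiv\overline{p}_{3}(11n)\pmod{11}$.

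For parts (2) and (3), fix $p\ne 11$ and $N$ coprime to $p$, and set $v_{k}=\overline{p}_{3}\big(11\,p^{2k-1}N\big)$. Specialising (\ref{r7rec}) to $n=p^{2k-1}N$ kills the Legendre term, and translating through Theorem \ref{mod11} (all relevant signs equal $(-1)^{N}$ since $p$ is odd) yields
\begin{equation}\label{vrec}
v_{k+1}\equiv(p^{5}+1)v_{k}-p^{5}v_{k-1}\pmod{11},\qquad k\ge 1,
\end{equation}
with initial term $v_{0}=0$ (as $r_{7}(N/p)=0$). The characteristic polynomial is $(x-1)(x-p^{5})$, so $v_{k}\equiv A+Bp^{5k}$, and $v_{0}=0$ forces $v_{k}\equiv v_{1}\big(1+p^{5}+\cdots+p^{5(k-1)}\big)\pmod{11}$. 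By Euler's criterion $p^{5}\equiv\big(\tfrac{p}{11}\big)\pmod{11}$, and the quadratic residues mod $11$ are precisely $\{1,3,4,5,9\}$. If $p$ is a residue then $p^{5}\equiv 1$, the root is double, and $v_{k}\equiv k\,v_{1}$, which vanishes once $11\mid k$; taking $k=11(\alpha+1)$ gives the exponent $2k-1=22\alpha+21$ of part (2). If $p$ is a non-residue then $p^{5}\equiv-1$ and $v_{k}\equiv\tfrac12\big(1-(-1)^{k}\big)v_{1}$, which vanishes for even $k=2(\alpha+1)$, giving the exponent $4\alpha+3$ of part (3).

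The one substantive step is establishing (\ref{r7rec}); everything afterward is a two-term linear recurrence closed by Euler's criterion, and the residue/non-residue dichotomy together with the special exponents $22\alpha+21$ and $4\alpha+3$ are merely the shadow of $p^{5}\equiv\big(\tfrac{p}{11}\big)$ and of the orders $1$ and $2$ of $p^{5}$ modulo $11$. Thus the hard part is to justify (\ref{r7rec}): either by citing a sum-of-seven-squares recurrence from the literature (the analogue of the reference used for (\ref{r3start})), or by verifying directly that $\varphi(q)^{7}$ is a Hecke eigenform of weight $7/2$ on $\Gamma_{0}(4)$ with $T_{p^{2}}$-eigenvalue $p^{5}+1$, which amounts to checking that the weight-$7/2$ space contributing to $\varphi(q)^{7}$ carries no extra eigenform.
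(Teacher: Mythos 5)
Your proposal is correct and takes essentially the same route as the paper: the recurrence you posit for $r_{7}(p^{2}n)$ is precisely the identity the paper quotes from Cooper's Lemma 5.1 (its equation (\ref{r7start})), so the one step you flag as needing justification is settled by citation, and the remaining steps --- translating through Theorem \ref{mod11}, iterating the resulting two-term congruence with $\overline{p}_{3}(11N/p)=0$, and splitting according to $p^{5}\equiv\pm 1\pmod{11}$ --- coincide with the paper's argument. The only cosmetic difference is that you solve the recurrence via its characteristic polynomial and Euler's criterion, whereas the paper sums the geometric series directly and lists the residue classes.
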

\begin{proof}
From  \cite[Lemma 5.1]{Cooper} we have
\begin{equation}\label{r7start}
r_7(p^2n)=\Big(p^5-p^2\Big(\frac{-n}{p}\Big)+1\Big)r_{7}(n)-p^5r_{7}(n/p^2),
\end{equation}
here as before, $(\frac{\cdot}{p})$ is the Legendre symbol and we take $r_{7}(n/p^2)=0$ unless $p^2|n$.
Replacing $n$ by $np$, we see that
\[r_7(p^3n)=(p^5+1)r_{7}(pn)-p^5r_{7}(n/p).\]
Hence by Theorem \ref{mod11} we have
\begin{equation}\label{r7re}
\overline{p}_{3}(11p^3n)\equiv (p^5+1)\overline{p}_{3}(11pn)-p^5\overline{p}_{3}(11n/p) \pmod{11}.
\end{equation}
From (\ref{r7re}), by iterating we see that
\[\overline{p}_{3}(11p^{2k+1}n) \equiv \frac{p^5(p^{5k}-1)}{p^5-1}\Big(\overline{p}_{3}(11pn)-\overline{p}_{3}(11n/p)\Big)+\overline{p}_{3}(11pn) \pmod{11}.\]
Let $n=N$, which is coprime to $p$. We get $\overline{p}_{3}(11n/p)=0$ and thus
\begin{equation}\label{iterate11}
\overline{p}_{3}(11p^{2k+1}N) \equiv \Big(\frac{p^5(p^{5k}-1)}{p^5-1}+1\Big)\overline{p}_{3}(11pN) \pmod{11}.
\end{equation}

(1) Let $p=11$ in (\ref{r7start}). We deduce that $r_{7}(11^2\cdot n)\equiv r_{7}(n)$ (mod 11). By Theorem \ref{mod11} we get the desired congruence we want.

(2) Let $k=11\alpha +10$ in (\ref{iterate11}). Note that $p\equiv 1,3,4,5,9$ (mod $11$) implies ${{p}^{5}}\equiv 1$ (mod  $11$). We have
\[\frac{{{p}^{55\alpha +50}}-1}{{{p}^{5}}-1}=1+{{p}^{5}}+{{p}^{10}}+\cdots +{{p}^{5(11\alpha +9)}}\equiv 10 \pmod{11}.\]
It follows from (\ref{iterate}) that $\overline{p}_{3}(11p^{22\alpha+21}N)\equiv 0$ (mod 11).\\

(3)  Let $k=2\alpha +1$ in (\ref{iterate11}).  Note that $p\equiv 2,6,7,8,10$ (mod $11$) implies ${{p}^{5}}\equiv -1$  (mod  $11$). It follows from (\ref{iterate11}) that $\overline{p}_{3}(11p^{4\alpha+3}N) \equiv 0$ (mod 11).
\end{proof}

For example,  let  $p=7$ and $N=7n+r$, $r \in \{1,2,3,4,5,6\}$ in part (2) of Theorem \ref{twomod11}, we have
 \[{{\overline{p}}_{3}}\big(11\cdot {{7}^{4\alpha +3}}(7n+r)\big)\equiv 0 \pmod{11}.\]
By a similar argument to the proof of Corollary \ref{mod7cor}, we can prove
\begin{cor}
${{\overline{p}}_{3}}(n)$ is divisible by $11$ for at least $1/4400$ of all the time.
\end{cor}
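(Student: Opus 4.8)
The plan is to imitate the proof of Corollary~\ref{mod7cor}, now using the mod-$11$ family
\[
\overline{p}_3\big(11\cdot 7^{4\alpha+3}(7n+r)\big)\equiv 0 \pmod{11},
\qquad \alpha\ge 0,\ r\in\{1,2,3,4,5,6\},
\]
which is exactly the family extracted from Theorem~\ref{twomod11} just above the statement. Thus $\overline{p}_3(n)\equiv 0 \pmod{11}$ holds for every $n$ in the union of the arithmetic sequences $S_{\alpha,r}=\big\{11\cdot 7^{4\alpha+3}(7n+r):n\ge 0\big\}$, and the whole task reduces to bounding the natural density of $\bigcup_{\alpha,r}S_{\alpha,r}$ from below.

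First I would check that the $S_{\alpha,r}$ are pairwise disjoint, so that their densities may be added. For a fixed $\alpha$ and $r\ne r'$ in $\{1,\dots,6\}$, the elements satisfy $7n+r\not\equiv 7n'+r'\pmod 7$, so $S_{\alpha,r}\cap S_{\alpha,r'}=\varnothing$. For $\alpha\ne\alpha'$ I would argue $7$-adically: since $1\le r\le 6$ forces $7\nmid(7n+r)$ and $11$ is coprime to $7$, every element of $S_{\alpha,r}$ is divisible by $7$ to the exact power $4\alpha+3$, while every element of $S_{\alpha',r'}$ is divisible by $7$ to the exact power $4\alpha'+3$; as $4\alpha+3\ne 4\alpha'+3$, these sets cannot meet.

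Next I would compute the density. Each $S_{\alpha,r}$ is an arithmetic progression of common difference $11\cdot 7^{4\alpha+4}$, hence of density $\big(11\cdot 7^{4\alpha+4}\big)^{-1}$. Summing over the six admissible $r$ and over $\alpha\ge 0$,
\[
\sum_{\alpha\ge 0}\frac{6}{11\cdot 7^{4\alpha+4}}
=\frac{6}{11}\cdot\frac{7^{-4}}{1-7^{-4}}
=\frac{6}{11\,(7^{4}-1)}
=\frac{6}{11\cdot 2400}
=\frac{1}{4400}.
\]
Since every element of $\bigcup_{\alpha,r}S_{\alpha,r}$ is an integer $n$ with $\overline{p}_3(n)\equiv 0 \pmod{11}$, the density of such $n$ is at least $1/4400$, as claimed.

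The computation is entirely routine; the only delicate point — and the single place the argument could break — is the disjointness across distinct $\alpha$, which rests on the coprimality $7\nmid(7n+r)$ guaranteed by $r\in\{1,\dots,6\}$. (Allowing $r=0$ would spoil the exact-$7$-power bookkeeping and cause the sequences to overlap, so I would be careful to keep $r$ in that range throughout.)
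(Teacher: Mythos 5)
Your proposal is correct and follows exactly the route the paper intends: it takes the family $\overline{p}_{3}\big(11\cdot 7^{4\alpha+3}(7n+r)\big)\equiv 0 \pmod{11}$, checks disjointness of the progressions (via the exact power of $7$ dividing their elements), and sums the densities $6/(11\cdot 7^{4\alpha+4})$ over $\alpha\ge 0$ to get $6/\big(11(7^{4}-1)\big)=1/4400$. The paper only sketches this as ``a similar argument to the proof of Corollary \ref{mod7cor},'' so your write-up simply supplies the details of that same argument.
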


\bibliographystyle{amsplain}

\end{document}